\definecolor {processblue}{cmyk}{0.96,0,0,0}
\newtheorem{thm}{Theorem}[section]
\theoremstyle{definition}
\newtheorem{cor}[thm]{Corollary}
\newtheorem{prop}[thm]{Proposition}
\newtheorem{defn}[thm]{Definition}
\newtheorem{rem}[thm]{Remark}
\newtheorem{ex}[thm]{Example}
\numberwithin{equation}{section}
\begin{document}
\title[Second ideal intersection graph of a commutative ring]
{Second ideal intersection graph of a commutative ring}
\author{ F. Farshadifar}
\address{Department of Mathematics Education, Farhangian University, P.O. Box 14665-889, Tehran, Iran.}
\email{f.farshadifar@cfu.ac.ir}

\subjclass[2000]{05C25; 05C69; 13A15, 13A99}%
\keywords {Graph, prime ideal, second ideal, intersection, minimal ideal.}

\begin{abstract}
Let $R$ be a commutative ring with
identity.
In this paper, we introduce and investigate the second ideal intersection graph $SII(R)$ of $R$ with vertices
are non-zero proper ideals of $R$ and two distinct vertices $I$ and $J$ are adjacent if and only if $I \cap J$ is a second ideal of $R$.
\end{abstract}
\maketitle
\section{Introduction}
\noindent
Throughout this paper, $R$ will denote a commutative ring with identity and $\Bbb Z$ will denote the ring of of integers.
Also, "$\subset$" will denote the strict inclusion.

An ideal $I$ of $R$ is said to be a \emph{second ideal} if $I \neq 0$ and for every element $r$ of $R$ we have either $rI= 0$ or $rI= I$ \cite{Y01}.

A \emph{graph} $G$ is defined as the pair $(V(G),E(G))$, where $V(G)$ is the set of vertices of $G$ and $E(G)$ is the set of edges of $G$. For two distinct vertices $a$ and $b$ of $V(G)$, the notation $a-b$ means that $a$ and $b$ are adjacent. A graph $G$ is said to be \emph{complete} if $a-b$ for all distinct $a, b\in V(G)$, and $G$ is said to be \emph{empty} if $E(G) =\emptyset$. Note by this definition that a graph may be empty even if $V (G)\not =\emptyset$. An empty graph could also be described as totally disconnected. If $|V (G)|\geq 2$, a \emph{path} from $a$ to $b$ is a series
of adjacent vertices $a-v_1-v_2-...-v_n-b$. The \emph{length of a path} is the number of edges it contains. A \emph{cycle} is a path that begins and ends at the same vertex in which no edge is repeated, and all vertices other than the starting and ending vertex are distinct. If a graph $G$ has a cycle, the \emph{girth} of $G$ (notated $g(G)$) is defined as the length of the shortest cycle of $G$; otherwise, $g(G) =\infty$. A graph $G$ is \emph{connected} if for every pair of distinct vertices $a, b\in V (G)$, there exists a path from $a$ to $b$. If there is a path from $a$ to $b$ with $a, b \in V (G)$,
then the \emph{distance from} $a$ to $b$ is the length of the shortest path from $a$ to $b$ and is denoted by $d(a, b)$. If there is not a path between $a$ and $b$, $d(a, b) = \infty$. The \emph{diameter} of $G$ is diam$(G) = Sup\{d(a,b)| a, b \in V(G)\}$.
 A graph without any cycles is an \textit{acyclic graph}. A vertex that is adjacent to
every other vertices is said to be a\textit{universal vertex} whereas a vertex with degree
zero is called an \textit{isolated vertex}.

In \cite{saha2023prime}, the authors introduced and investigate the definition of \textit{the prime ideal sum graph} of $R$, denoted
by $PIS(R)$, which is a graph whose vertices are non-zero proper ideals of $R$ and two distinct
vertices $I$ and $J$ are adjacent if and only if $I + J$ is a prime ideal of $R$.
In this paper, we introduce and study the second ideal intersection graph $SII(R)$ of $R$ with vertices
are non-zero proper ideals of $R$ and two distinct vertices $I$ and $J$ are adjacent if and only if $I \cap J$ is a second ideal of $R$.
This can be regarded as a dual notion of the prime ideal sum graph introduced in \cite{saha2023prime}.
Some of the results in this article are dual of the results for the prime ideal sum graph introduced in \cite{saha2023prime}.
\section{Main Results}
\noindent
\begin{defn}\label{2.1}
The \textit{second ideal intersection graph} of $R$, denoted
by $SII(R)$, is an undirected simple graph whose vertices are non-zero proper ideals of $R$ and two distinct
vertices $I$ and $J$ are adjacent if and only if $I \cap J$ is a second ideal of $R$.
This can be regarded as a dual notion of the prime ideal sum graph introduced in \cite{saha2023prime}.
\end{defn}

Let $n$ be a positive integer. Consider the ring $\Bbb Z_n$ of integers modulo $n$. We know that $\Bbb Z_n$ is a principal ideal ring and each of these ideals is generated by $\bar{m} \in \Bbb Z_n$, where $m$ is a factor of $n$. In this paper, we denote this ideal by $\langle m\rangle$.

\begin{ex}\label{000}
Let $R=\Bbb Z_{p^3q}$, where $p$, $q$ are primes. Then the non-zero proper ideals of $R$ are $\langle p \rangle$, $\langle q  \rangle$, $\langle p^2 \rangle$, $\langle pq  \rangle$, $\langle p^3\rangle$, and $\langle p^2q \rangle$. In the following figures,  we can see the graphs $PIS(\Bbb Z_{p^3q})$ and $SII(\Bbb Z_{p^3q})$.
\begin{figure}[H]
\centering
\begin{subfigure}[b]{0.49\textwidth}
\centering
\caption{$PIS(\Bbb Z_{p^3q})$.}
\begin{center}
\begin{tikzpicture}[auto,node distance=2 cm,
  thick,main node/.style={circle,fill=black!10,font=\sffamily\tiny\bfseries}]
\node[main node] (1) {$\langle q \rangle$};
\node[main node] (2) [right of=1] {$\langle pq \rangle$};
\node[main node] (3) [right of=2] {$\langle p^2 \rangle$};
\node[main node] (4) [below of=1] {$\langle p^2q \rangle$};
\node[main node] (5) [right of=4] {$\langle p \rangle$};
\node[main node] (6) [right of=5] {$\langle p^3 \rangle$};
\path[every node/.style={font=\sffamily\small}]
    (1) edge node [left] {} (2)
    (2) edge node [left] {} (3)
    (1) edge node [left] {} (4)
    (4) edge node [left] {} (5)
    (5) edge node [left] {} (6)
    (5) edge node [left] {} (3)
     (5) edge node [left] {} (2)
    (2) edge node [left] {} (6);
        \end{tikzpicture}
\end{center}
\end{subfigure}
\hfill
\begin{subfigure}[b]{0.49\textwidth}
\centering
\caption{$SII(\Bbb Z_{p^3q})$.}
\begin{center}
\begin{tikzpicture}
[auto,node distance=2 cm,
  thick,main node/.style={circle,fill=black!10,font=\sffamily\tiny\bfseries}]
\node[main node] (1) {$\langle p^3 \rangle$};
\node[main node] (2) [right of=1] {$\langle p^2 \rangle$};
\node[main node] (3) [right of=2] {$\langle q \rangle$};
\node[main node] (4) [below of=1] {$\langle p \rangle$};
\node[main node] (5) [right of=4] {$\langle p^2q \rangle$};
\node[main node] (6) [right of=5] {$\langle pq \rangle$};
\path[every node/.style={font=\sffamily\small}]
    (1) edge node [left] {} (2)
    (2) edge node [left] {} (3)
    (1) edge node [left] {} (4)
    (4) edge node [left] {} (5)
    (5) edge node [left] {} (6)
    (5) edge node [left] {} (3)
     (5) edge node [left] {} (2)
    (2) edge node [left] {} (6);
\end{tikzpicture}
\end{center}
\end{subfigure}
\end{figure}
 \end{ex}

 \begin{ex}\label{010}
Let $R=\Bbb Z_{p^2q^2}$, where $p$, $q$ are primes. Then the non-zero proper ideals of $R$ are $\langle p^2 \rangle$, $\langle q p^2 \rangle$, $\langle p \rangle$, $\langle pq  \rangle$, $\langle q\rangle$,  $\langle pq^2\rangle$, and $\langle q^2 \rangle$. In the following figures,  we can see the graphs $PIS(\Bbb Z_{p^2q^2})$ and $SII(\Bbb Z_{p^2q^2})$.
\begin{figure}[H]
\centering
\begin{subfigure}[b]{0.49\textwidth}
\centering
\caption{$PIS(\Bbb Z_{p^2q^2})$.}
\begin{center}
\begin{tikzpicture}[auto,node distance=2 cm,
  thick,main node/.style={circle,fill=black!10,font=\sffamily\tiny\bfseries}]
\node[main node] (1) {$\langle p^2 \rangle$};
\node[main node] (2) [right of=1] {$\langle p^2q \rangle$};
\node[main node] (3) [below left of=1] {$\langle p \rangle$};
\node[main node] (4) [below right of=1] {$\langle pq \rangle$};
\node[main node] (5) [below right of=2] {$\langle q \rangle$};
\node[main node] (6) [below right of=3] {$\langle pq^2 \rangle$};
\node[main node] (7) [below left of=5] {$\langle q^2 \rangle$};
\path[every node/.style={font=\sffamily\small}]
    (1) edge node [left] {} (3)
    (1) edge node [left] {} (6)
    (1) edge node [left] {} (4)
    (2) edge node [left] {} (5)
    (2) edge node [left] {} (7)
    (2) edge node [left] {} (3)
    (3) edge node [left] {} (6)
    (3) edge node [left] {} (4)
    (4) edge node [left] {} (5)
    (6) edge node [left] {} (5)
    (7) edge node [left] {} (5)
    (7) edge node [left] {} (4);
        \end{tikzpicture}
\end{center}
\end{subfigure}
\hfill
\begin{subfigure}[b]{0.49\textwidth}
\centering
\caption{$SII(\Bbb Z_{p^2q^2})$.}
\begin{center}
\begin{tikzpicture}[auto,node distance=2 cm,
  thick,main node/.style={circle,fill=black!10,font=\sffamily\tiny\bfseries}]
\node[main node] (1) {$\langle q^2 \rangle$};
\node[main node] (2) [right of=1] {$\langle q \rangle$};
\node[main node] (3) [below left of=1] {$\langle pq^2 \rangle$};
\node[main node] (4) [below right of=1] {$\langle pq \rangle$};
\node[main node] (5) [below right of=2] {$\langle p^2q \rangle$};
\node[main node] (6) [below right of=3] {$\langle p \rangle$};
\node[main node] (7) [below left of=5] {$\langle p^2 \rangle$};
\path[every node/.style={font=\sffamily\small}]
    (1) edge node [left] {} (3)
    (1) edge node [left] {} (6)
    (1) edge node [left] {} (4)
    (2) edge node [left] {} (5)
    (2) edge node [left] {} (7)
    (2) edge node [left] {} (3)
    (3) edge node [left] {} (6)
    (3) edge node [left] {} (4)
    (4) edge node [left] {} (5)
    (6) edge node [left] {} (5)
    (7) edge node [left] {} (5)
    (7) edge node [left] {} (4);
        \end{tikzpicture}
\end{center}
\end{subfigure}
\end{figure}
 \end{ex}

\begin{ex}\label{0190}
Let $R=\Bbb Z_{p^k}$, where $p$, $q$ are primes and $k$ is a positive integer. Then
$\langle p^{k-1} \rangle$ is the only second ideal of $R$. One can see that $SII(\Bbb Z_{p^k})$
is a star graph with center vertex $\langle p^{k-1} \rangle$.
\end{ex}

\begin{prop}\label{p0190}
Let $R=\Bbb Z_n$, where $n$ is a positive integer. Then we have the following.
\begin{itemize}
\item [(a)] If $n=p_1p_2...p_k$ ($k\geq 3$), where $p_i$'s are distinct prime numbers, then $SII(\Bbb Z_n)$ contains a cycle of length 3.
\item [(b)] If $n=p_1^3p_2...p_k$ ($k\geq 2$), where $p_i$'s are distinct prime numbers, then $SII(\Bbb Z_n)$ contains a cycle of length 3.
\end{itemize}
\end{prop}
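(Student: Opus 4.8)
The plan is to reduce adjacency in $SII(\Bbb Z_n)$ to a purely arithmetic condition and then exhibit an explicit triangle in each case. Two preliminary facts drive everything. First, for divisors $a,b$ of $n$ one has $\langle a\rangle \cap \langle b\rangle = \langle \operatorname{lcm}(a,b)\rangle$ in $\Bbb Z_n$, so $\langle a\rangle$ and $\langle b\rangle$ are adjacent exactly when $\langle \operatorname{lcm}(a,b)\rangle$ is a second ideal. Second, and crucially, I would first classify the second ideals of $\Bbb Z_n$. Writing $n=p_1^{a_1}\cdots p_k^{a_k}$ and an arbitrary ideal as $\langle m\rangle$ with $m=p_1^{b_1}\cdots p_k^{b_k}$, $0\le b_i\le a_i$, one computes $r\langle m\rangle=\langle \gcd(rm,n)\rangle$ and imposes the defining requirement $r\langle m\rangle\in\{0,\langle m\rangle\}$ for $r=p_j$. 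Testing each prime forces all but one exponent $b_i$ to equal $a_i$ and the exceptional one, say $b_j$, to equal $a_j-1$; hence the second ideals of $\Bbb Z_n$ are precisely $\langle n/p\rangle$ as $p$ runs over the prime divisors of $n$. This classification is the backbone of both parts.

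For part (a), $n=p_1\cdots p_k$ is squarefree, so the second ideals are the $\langle n/p_i\rangle=\big\langle \prod_{j\ne i}p_j\big\rangle$. I would take the three vertices $\langle n/(p_1p_2)\rangle$, $\langle n/(p_1p_3)\rangle$, $\langle n/(p_2p_3)\rangle$; the hypothesis $k\ge 3$ guarantees $1<n/(p_ip_j)<n$, so these are genuine and pairwise distinct vertices. A one-line $\operatorname{lcm}$ computation shows that their pairwise intersections are $\langle n/p_1\rangle$, $\langle n/p_2\rangle$, $\langle n/p_3\rangle$, all of which are second ideals. Hence these three vertices form a cycle of length $3$.

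For part (b), $n=p_1^3p_2\cdots p_k$, and I would organize the triangle around the single second ideal $\langle n/p_1\rangle=\langle p_1^2p_2\cdots p_k\rangle$. Take the three vertices $\langle n/p_1\rangle$, $\langle n/p_1^2\rangle=\langle p_1p_2\cdots p_k\rangle$ and $\langle n/(p_1p_2)\rangle=\langle p_1^2p_3\cdots p_k\rangle$; since $a_1=3$ and $k\ge 2$ these are proper nonzero and pairwise distinct. The latter two are the maximal proper divisors of $n/p_1$ obtained by lowering the $p_1$-exponent, respectively the $p_2$-exponent, by one; as they drop distinct primes, their $\operatorname{lcm}$ is again $n/p_1$, and each also has $\operatorname{lcm}$ equal to $n/p_1$ with $n/p_1$ itself. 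Thus all three pairwise intersections equal the second ideal $\langle n/p_1\rangle$, giving a triangle.

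The only genuinely delicate step is the classification of second ideals; once it is available, both parts are immediate verifications. The remaining care is bookkeeping: confirming that the chosen generators really are proper nonzero ideals — this is exactly where the hypotheses $k\ge 3$ in (a) and $a_1=3$, $k\ge 2$ in (b) enter, since they keep each generator strictly between $1$ and $n$ — and that the three vertices are distinct in each case.
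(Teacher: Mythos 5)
Your proposal is correct and takes essentially the same approach as the paper: both exhibit an explicit triangle of ideals whose pairwise intersections are ideals of the form $\langle n/p\rangle$, which are precisely the second (equivalently, minimal) ideals of $\Bbb Z_n$. The differences are cosmetic — the paper chooses slightly different vertices (e.g.\ $\langle p_1\cdots p_{k-1}\rangle$, $\langle p_1\cdots p_{k-2}\rangle$, $\langle p_{k-1}\rangle$ in part (a)) and merely asserts ``clearly'' that its intersection ideal is second, whereas you additionally prove the full classification of second ideals of $\Bbb Z_n$, making the verification self-contained.
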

\begin{proof}
(a) Clearly, $\langle p_1p_2...p_{k-1} \rangle$ is a second ideal of $R$. Set $I=\langle p_1p_2...p_{k-1} \rangle$, $J=\langle p_1p_2...p_{k-2} \rangle$, and $K=\langle p_{k-1} \rangle$. Then the graph $SII(\Bbb Z_n)$ contains the following cycle.
\begin{figure}[H]
\centering
\begin{tikzpicture}[auto,node distance=2 cm,
  thick,main node/.style={circle,fill=black!10,font=\sffamily\tiny\bfseries}]
\node[main node] (1) {I};
\node[main node] (2) [right of=1] {J};
\node[main node] (3) [below left of=2] {K};
\path[every node/.style={font=\sffamily\small}]
    (1) edge node [left] {} (2)
    (2) edge node [left] {} (3)
    (3) edge node [left] {} (1);
\end{tikzpicture}
\end{figure}

(b) Clearly, $\langle p_1^2p_2...p_k \rangle$ is a second ideal of $R$. Set $I=\langle p_1^2p_2...p_k \rangle$, $J=\langle p_2...p_k \rangle$, and $K=\langle p^2 \rangle$. Then the graph $SII(\Bbb Z_n)$ contains the following cycle.
\begin{figure}[H]
\centering
\begin{tikzpicture}[auto,node distance=2 cm,
  thick,main node/.style={circle,fill=black!10,font=\sffamily\tiny\bfseries}]
\node[main node] (1) {I};
\node[main node] (2) [right of=1] {J};
\node[main node] (3) [below left of=2] {K};
\path[every node/.style={font=\sffamily\small}]
    (1) edge node [left] {} (2)
    (2) edge node [left] {} (3)
    (3) edge node [left] {} (1);
\end{tikzpicture}
\end{figure}
\end{proof}

The \textit{intersection graph} of $R$, denoted
by $\Gamma(R)$, is a graph whose vertices are non-zero proper ideals of $R$ and two distinct
vertices $I$ and $J$ are adjacent if and only if $I \cap J\not=0$ \cite{ITS09}.
\begin{rem}\label{2.661}
Since the second ideals of $R$ are non-zero, $SII(R)$ is a subgraph of $\Gamma(R)$.
This subgraph is not necessarily  induced subgraph. For example, as we can see in the Figure B in the Example \ref{000}, $\langle q \rangle$ is not adjacent to $\langle pq \rangle$ in the graph $SII(\Bbb Z_{p^3q})$. But $\langle q \rangle$ is adgacent to $\langle pq \rangle$ in the graph $\Gamma(\Bbb Z_{p^3q})$.
\end{rem}

An \textit{Eulerian graph} is a graph which has a path that visits each edge
exactly once which starts and ends on the same vertex. A connected
non-empty graph is Eulerian if and only if the valency of each vertex is even \cite[Theorem 4.1]{BM76}.
\begin{rem}\label{2.6861}
In \cite[Theorem 5.1]{ITS09}, it is shown that $\Gamma(\Bbb Z_n)$ is Eulerian if and only if $n=p_1p_2 \ldots p_m$ or $n=p_1^{n_1}p_2^{n_2}\ldots p_m^{n_m}$, where each $n_i$ is even ($n_i \in \Bbb N$, $i=1,2,\ldots m$) and $p_i$'s are distinct primes.
But as we can see in Figure D, $SII(\Bbb Z_n)$ is not Eulerian when $n=p^2q^2$. One can see that if 
 $n=p_1p_2\ldots p_m$, where $p_i$'s are distinct primes, then $SII(\Bbb Z_n)$ is an Eulerian graph.
\end{rem}

 \begin{thm}\label{2.2}
Let $R$ be a commutative ring in which every ideal contains a minimal ideal (e.g., when $R$ is an Artinian ring). Then
$SII(R)$ has a universal vertex if and only if one
of the two statements hold:
\begin{itemize}
\item [(a)] $R$ has exactly one minimal ideal.
\item [(b)] $R$ has exactly two minimal ideals $M_1$ and $M_2$ such that $M_1 + M_2$ is a non-trivial
maximal ideal and that there is no non-second ideal that properly contained in $M_1+M_2$.
\end{itemize}
\end{thm}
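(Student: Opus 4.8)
The plan is to build everything on two observations about minimal ideals. First, every minimal ideal is a second ideal: if $M$ is minimal and $r \in R$, then $rM$ is an ideal contained in $M$, so $rM = 0$ or $rM = M$. Second, by the standing hypothesis every nonzero ideal contains a minimal ideal, so an ideal meets every vertex nontrivially precisely when it contains every minimal ideal. Throughout I assume $R$ is not a field, so that $SII(R)$ has vertices at all. For the ``if'' part of (a), let $M$ be the unique minimal ideal; any vertex $J$ contains a minimal ideal, which must be $M$, so $M \cap J = M$ is second and $M$ is adjacent to all other vertices.

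For the ``if'' part of (b), I claim $S := M_1 + M_2$ is a universal vertex. It is a proper nonzero ideal because it is a non-trivial maximal ideal. Let $J \neq S$ be any vertex. Then $J$ contains $M_1$ or $M_2$, so $S \cap J \supseteq M_i \neq 0$; and since $S$ is maximal while $J$ is proper with $J \neq S$, we cannot have $S \subseteq J$, so $S \cap J \subsetneq S$. By hypothesis no non-second ideal is properly contained in $S$, hence $S \cap J$ is second, i.e.\ $S$ is adjacent to $J$. The point to notice is that maximality of $S$ removes the possibility $S \cap J = S$, so this direction never uses that $S$ itself is second.

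For the converse, let $V$ be universal. Three structural facts drive the argument: (i) every minimal ideal $M$ lies in $V$, since $V \cap M$ is second, hence nonzero, hence equal to $M$; (ii) every nonzero ideal $J \subsetneq V$ is second, since then $V \cap J = J$; and (iii) $V$ is maximal or second, since if a proper ideal $W$ strictly contains $V$ then $V = V \cap W$ must be second. If $R$ has exactly one minimal ideal we are in case (a). If it has exactly two, $M_1, M_2$, then $M_1 + M_2 = M_1 \oplus M_2$; using that a minimal ideal is a simple module with maximal annihilator, one checks that $M_1 \oplus M_2$ is second iff $\mathrm{Ann}(M_1) = \mathrm{Ann}(M_2)$, and that this equality would make $M_1 \oplus M_2$ a two-dimensional $R/\mathfrak{m}$-space, producing a third minimal ideal. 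So the annihilators differ, $M_1 + M_2$ is not second, whence (ii) forces $M_1 + M_2 = V$ (a proper containment would make it second), (iii) then forces $V$ to be maximal, and (ii) yields the remaining clause of (b).

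The main obstacle is the case of three or more minimal ideals, where one must exclude a universal vertex. The natural attempt runs as above: for any two minimal ideals $M_i, M_j \subseteq V$, the presence of a third minimal ideal forces $M_i \oplus M_j \subsetneq V$, so by (ii) it is second and $\mathrm{Ann}(M_i) = \mathrm{Ann}(M_j)$; hence all minimal ideals would share a single annihilator $\mathfrak{m}$. I expect this to be the genuinely hard step, because it stops short of a contradiction: a common annihilator only says the socle is a vector space over $R/\mathfrak{m}$, which is entirely consistent with $V$ being the (proper) socle itself. Indeed, for $R = \mathbb{F}_2[x,y]/(x,y)^2$ the three minimal ideals $\langle x\rangle, \langle y\rangle, \langle x+y\rangle$ all have annihilator $\mathfrak{m} = (x,y)$, and $SII(R)$ is the star $K_{1,3}$ with universal vertex $\mathfrak{m}$. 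This is therefore the step I would scrutinise hardest, since as stated the ``only if'' direction seems to need an extra hypothesis on $R$ (for instance, being a principal ideal ring, which covers all the $\mathbb{Z}_n$ examples) in order to rule out socles carrying several one-dimensional ideals.
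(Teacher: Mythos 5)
Your ``if'' direction and your treatment of the converse when $R$ has one or two minimal ideals are correct, and they follow essentially the same route as the paper: the unique minimal ideal is adjacent to everything; maximality of $M_1+M_2$ forces $ (M_1+M_2)\cap J$ to be a nonzero \emph{proper} subideal of $M_1+M_2$, hence second; and your facts (i)--(iii) are exactly the structural observations the paper's converse uses. In the two-minimal-ideal case your argument is actually more complete than the paper's: the paper rules out $I\neq M_1+M_2$ by asserting that a sum of two minimal ideals is never second, whereas you prove the correct statement, namely that $M_1\oplus M_2$ is second if and only if $Ann_R(M_1)=Ann_R(M_2)$, and that this equality is impossible when $R$ has \emph{exactly} two minimal ideals (the diagonal would be a third one).

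The step you flagged for scrutiny is not a gap in your argument but a genuine error in the theorem, and your counterexample is valid. For $R=\mathbb{F}_2[x,y]/(x,y)^2$: $R$ is Artinian; its nonzero proper ideals are $\langle x\rangle,\langle y\rangle,\langle x+y\rangle$ and $\mathfrak{m}=(x,y)$; the first three are the minimal ideals; and $\mathfrak{m}$ is second since $r\mathfrak{m}=\mathfrak{m}$ for $r$ a unit while $r\mathfrak{m}\subseteq\mathfrak{m}^2=0$ for $r\in\mathfrak{m}$. Hence $SII(R)$ is a star with universal vertex $\mathfrak{m}$, yet $R$ has three minimal ideals, so neither (a) nor (b) holds and the ``only if'' direction fails as stated. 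This same ring pinpoints where the paper's proof breaks in its exclusion of three or more minimal ideals: in Case 1 the paper infers from ``$M_1+M_2+M_3$ is second'' that $M_1=M_2=M_3$, but all that follows (each $Ann_R(M_i)$ being maximal and the annihilator of a second ideal being prime) is $Ann_R(M_1)=Ann_R(M_2)=Ann_R(M_3)$, which is exactly the situation here; in Case 2 the paper invokes the false claim that a sum of two minimal ideals cannot be second ($\langle x\rangle+\langle y\rangle=\mathfrak{m}$ is second, and moreover equals $I$, so it is not even a distinct vertex). One small inaccuracy in your write-up: in sketching the ``natural attempt'' you say a third minimal ideal forces $M_i\oplus M_j\subsetneq V$; it does not (in your example $\langle x\rangle\oplus\langle y\rangle=V$), but since you present that attempt precisely as the step that fails, your conclusion is unaffected. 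Your suggested repair is also sound: under a hypothesis such as $R$ being a principal ideal ring (which covers all the $\mathbb{Z}_n$ examples the paper relies on), distinct minimal ideals have distinct, in fact comaximal, annihilators, so no sum of two of them is second and the paper's case analysis then goes through.
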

\begin{proof}
Let (a) hold and $M$ be the minimal ideal of $R$. Then for each ideal $I$ of $R$, we have $I \cap M = M$, which is a second ideal and hence $I$ is adjacent to $M$. Thus, $M$ is a universal vertex.

Let (b) hold and set $I:= M_1 + M_2$. Assume that $J$ is a non-trivial ideal other than $I$
and without loss of generality, let $M_1\subseteq J$. Since by assumption, $I$ is a maximal ideal, we have
$M_1\subseteq I \cap J\subset I$. Now, by assumption, $I\cap J$ is a second ideal of $R$ and so $I$ is a universal vertex.

Conversely, let $SII(R)$ have a universal vertex, say $I$. If $R$ has a unique minimal
ideal, the proof is done. Now, assume that $R$ has at least three minimal ideals, say
$M_1,M_2$ and $M_3$. Note that $I$ cannot be a minimal ideal as two distinct minimal
ideals are not adjacent. Since $I$ is not a minimal ideal, then it is contains a minimal ideal, say $M_1$. If possible, let $M_2 \not \subseteq I$. Then by minimality of $M_2$, we have $I \cap M_2= 0$. Thus $I$ is not adjacent to $M_2$, a contradiction. Hence, $M_1+M_2+M_3\subseteq I$. Now, one of the following two cases holds.

\textbf{Case 1.} Let $I \not= M_1 +M_2+M_3$. Since $I\cap (M_1 +M_2+M_3)=M_1 +M_2+M_3$
and $I$ is a universal vertex in $SII(R)$, we get that $M_1 +M_2+M_3$ is a second ideal of $R$.
Thus $M_1 +M_2+M_3\subseteq (0:_MAnn_R(M_1)Ann_R(M_2)Ann_R(M_3))$ implies that $M_1=M_2=M_3$, which is a contradiction.

\textbf{Case 2.} Let $I = M_1 +M_2+M_3$.  Set $T = M_1+M_2$. Since sum of two minimal
ideals cannot be a second ideal, $I \cap T = T$ is not a second ideal, i.e.  $I$ is not adjacent to $T$, which contradicts
with our assumption that $I$ is a universal vertex. Therefore, $R$ has exactly two
minimal ideals, say $M_1$ and $M_2$. By the same argument as above, we conclude that
$I = M_1 +M_2$. Now, we show that $I$ is a maximal ideal. If possible, let there exists a non-trivial
ideal $J$ of $R$ such that $M_1 +M_2=I\subset J$. Then, as $I$ is a universal vertex, $J \cap I = I = M_1 +M_2$ is a
second ideal of $R$, which is a contradiction. Thus, $I$ is a maximal ideal.
If possible, let $J$ be a non-second ideal such that $M_i \subset J \subset I = M_1 +M_2$, where
$i = 1$ or $2$. But, it follows that $I \cap J = J$, a non-second ideal and hence $I$ is not adjacent to $J$, a
contradiction. Thus, there does not exist such ideal $J$ and the proof is completed.
\end{proof}

Recall that $R$ is said to be \textit{coreduced ring} if $rR=r^2R$ for each $r\in  R$ \cite {MR3755273}.

For an ideal $I$ of $R$ the \emph{second radical} (or \emph{second socle}) of $I$ is defined as the sum of all second ideals of $R$ contained in $I$ and it is denoted by $sec(I)$. In case $I$ does not contain any second ideal, the second radical of $I$ is defined to be $(0)$ (see \cite{CAS13} and \cite{AF11}). If $sec(R)=R$, then $R$ is coreduced \cite[Proposition 2.22]{MR3755273}.
\begin{rem}\label{2.3}
Let $R$ be a commutative ring in which every ideal contains a minimal ideal and $R$ is not a coreduced ring.
Then $sec(R)$ is adjacent to each
element of $Spec^s(R)$, where $Spec^s(R)$ is the set of all second ideals of $R$.
\end{rem}
\begin{proof}
As $R$ is a commutative ring in which every ideal contains a minimal ideal and $R$ is not a coreduced ring, $sec(R)$ is a non-zero proper ideal of $R$. For each second ideal $I$ of $R$, we have $I\cap sec(R)=I$ is a second ideal of $R$. Thus $sec(R)$ is adjacent to each
element of $Spec^s(R)$.
\end{proof}

\begin{cor}\label{2.4}
Let $R$ be a commutative ring in which every ideal contains a minimal ideal and $R$ is not a coreduced ring.
Then $sec(R)$ is the only minimal ideal of $R$ if and only if $sec(R)$ is a universal vertex of $SII(R)$.
\end{cor}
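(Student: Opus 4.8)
The plan is to read the statement through the lenses of Remark \ref{2.3} and Theorem \ref{2.2}. First I would record that under the standing hypotheses $sec(R)$ is a legitimate vertex of $SII(R)$: it is non-zero because every minimal ideal is a second ideal (for a minimal ideal $M$ and $r\in R$ one has $rM\subseteq M$, so by minimality $rM=0$ or $rM=M$), and it is proper because $R$ is not coreduced, whence $sec(R)\neq R$. Remark \ref{2.3} already supplies adjacency of $sec(R)$ to every second ideal, so the entire content of universality is adjacency to the \emph{non-second} vertices: $sec(R)$ is universal if and only if $sec(R)\cap J$ is a second ideal for every non-second proper ideal $J$.

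For the forward implication I would argue directly. If $sec(R)$ is the only minimal ideal, then it is in fact the only second ideal: any second ideal $S$ is non-zero, hence contains a minimal ideal, which must be $sec(R)$, so $sec(R)\subseteq S$; and $S\subseteq sec(R)$ by the definition of the second radical, giving $S=sec(R)$. Consequently, since by hypothesis every non-zero ideal contains a minimal ideal (necessarily $sec(R)$), every vertex $I$ satisfies $sec(R)\subseteq I$ and thus $I\cap sec(R)=sec(R)$, a second ideal; so $I$ is adjacent to $sec(R)$ and $sec(R)$ is universal. Equivalently, this is the branch $(a)$ of Theorem \ref{2.2}.

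For the converse I would feed $sec(R)$ into Theorem \ref{2.2}: since $sec(R)$ is a universal vertex, $SII(R)$ has a universal vertex, so either $(a)$ $R$ has a unique minimal ideal $M$, or $(b)$ $R$ has exactly two minimal ideals $M_1,M_2$ with $M_1+M_2$ a non-trivial maximal ideal in which every proper subideal is second. In branch $(a)$ I would try to upgrade ``unique minimal ideal'' to ``$sec(R)$ is that minimal ideal'' by showing that no second ideal strictly contains $M$, so that the defining sum collapses to $sec(R)=M$.

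The main obstacle is branch $(b)$, and I expect it to be genuinely problematic. There $M_1,M_2$ are second, so $M_1+M_2\subseteq sec(R)\neq R$, and maximality of $M_1+M_2$ forces $sec(R)=M_1+M_2$; being a sum of two distinct minimal ideals this is itself not a second ideal, yet because $M_1+M_2$ is maximal with all proper subideals second, $sec(R)$ can still meet every other vertex in a second ideal and hence be universal. Concretely, in $R=\mathbb{Z}_{12}$ the second ideals are exactly $\langle 4\rangle$ and $\langle 6\rangle$, so $sec(R)=\langle 2\rangle$, and one checks $\langle 2\rangle\cap\langle 3\rangle=\langle 6\rangle$, $\langle 2\rangle\cap\langle 4\rangle=\langle 4\rangle$, $\langle 2\rangle\cap\langle 6\rangle=\langle 6\rangle$ are all second, so $sec(R)$ is a universal vertex even though $R$ has two minimal ideals and $sec(R)$ is not minimal. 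Thus the step I would have to repair is precisely the passage from ``$sec(R)$ universal'' to ``$sec(R)$ minimal'': absent an additional hypothesis ruling out case $(b)$, the reverse implication appears to fail, and the honest outcome of the plan is that the equivalence holds cleanly only in the setting of Theorem \ref{2.2}(a).
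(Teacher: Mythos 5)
Your assessment is correct, and the defect you isolate lies in the corollary itself, not in your argument. The paper's entire proof is the single line ``This follows from Theorem \ref{2.2}'', and that citation only supports the direction you proved: if $sec(R)$ is the only minimal ideal, then (as you argue, and as Remark \ref{2.3} partially anticipates) $sec(R)$ is a non-zero proper ideal contained in every vertex, hence a universal vertex. The converse does not follow from Theorem \ref{2.2}, because that theorem permits alternative (b), and alternative (b) is compatible with the hypotheses of the corollary. Your $\Bbb Z_{12}$ witness checks out: $\Bbb Z_{12}$ is Artinian and not coreduced (since $2\Bbb Z_{12}\not=4\Bbb Z_{12}$); its second ideals are exactly $\langle 4\rangle$ and $\langle 6\rangle$, so $sec(\Bbb Z_{12})=\langle 4\rangle+\langle 6\rangle=\langle 2\rangle$; and $\langle 2\rangle\cap\langle 4\rangle=\langle 4\rangle$, $\langle 2\rangle\cap\langle 6\rangle=\langle 6\rangle$, $\langle 2\rangle\cap\langle 3\rangle=\langle 6\rangle$ are all second, so $\langle 2\rangle$ is universal, while the minimal ideals of $\Bbb Z_{12}$ are $\langle 4\rangle$ and $\langle 6\rangle$, so $sec(\Bbb Z_{12})$ is not minimal at all. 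The paper's own figure for $SII(\Bbb Z_{12})$ in Example \ref{002} displays exactly this: $\langle 2\rangle$ is adjacent to all three other vertices.

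Moreover, the failure is systematic, as you suspected. Whenever $R$ is not coreduced and satisfies condition (b) of Theorem \ref{2.2}, the two minimal ideals give $M_1+M_2\subseteq sec(R)$, while $sec(R)\not=R$ because $R$ is not coreduced; maximality of $M_1+M_2$ then forces $sec(R)=M_1+M_2$, which by the proof of Theorem \ref{2.2} is a universal vertex, yet it properly contains $M_1$ and so is not minimal. Hence under the stated hypotheses, universality of $sec(R)$ does not single out case (a), and the corollary's ``if and only if'' is false as written. What survives is precisely your forward implication; a correct converse must additionally assume that $R$ does not satisfy condition (b) of Theorem \ref{2.2}. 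Your refusal to push the converse through is the honest and correct conclusion.
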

\begin{proof}
This follows from Theorem \ref{2.2}.
\end{proof}

\begin{thm}\label{2.5}
Let $R$ be a commutative ring in which every ideal contains a minimal ideal.
An ideal $I$ of $R$ is an isolated vertex in $SII(R)$ if and only if $I$ is
a minimal as well as maximal ideal of $R$.
\end{thm}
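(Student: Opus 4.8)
The plan is to prove both directions directly, resting on one elementary observation that I would establish first: \emph{every minimal ideal of $R$ is a second ideal}. Indeed, if $M$ is minimal and $r \in R$, then $rM$ is an ideal of $R$ contained in $M$, so minimality forces $rM = 0$ or $rM = M$; since $M \neq 0$, this is precisely the defining condition of a second ideal. I will invoke this fact in both implications.

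For the backward implication, suppose $I$ is both minimal and maximal, and let $J \neq I$ be any other vertex (a non-zero proper ideal). Since $I \cap J \subseteq I$ and $I$ is minimal, either $I \cap J = 0$ or $I \cap J = I$; in the second case $I \subseteq J$, and maximality of $I$ together with properness of $J$ forces $J = I$, a contradiction. Hence $I \cap J = 0$, which is not a second ideal, so $I$ is adjacent to nothing and is isolated.

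For the forward implication, I would argue by contraposition twice. Assuming $I$ is isolated, I first show $I$ is minimal: the hypothesis guarantees a minimal ideal $M \subseteq I$, and if $M \neq I$ then $M$ is a vertex with $I \cap M = M$ a second ideal, making $I$ adjacent to $M$ and contradicting isolation; thus $M = I$ and $I$ is minimal (so in particular a second ideal). I then show $I$ is maximal: were it not, some ideal $J$ would satisfy $I \subsetneq J \subsetneq R$, and then $I \cap J = I$ is a second ideal, again contradicting isolation. Combining, $I$ is both minimal and maximal.

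The whole argument is short once the minimal-is-second observation is available; the only genuine care is bookkeeping --- verifying at each step that the comparison ideal ($M$ or $J$) really is a vertex (non-zero and proper) and really is distinct from $I$. The standing hypothesis that every ideal contains a minimal ideal is exactly what supplies the minimal ideal $M \subseteq I$ needed to rule out the non-minimal case, so that is the one place the hypothesis is essential.
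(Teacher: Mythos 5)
Your proposal is correct and follows essentially the same route as the paper's proof: contradiction via a properly contained minimal ideal (for minimality) and via a strictly larger proper ideal (for maximality) in the forward direction, and the minimality/maximality dichotomy on $I \cap J$ in the backward direction. The only difference is presentational --- you isolate the observation that minimal ideals are second as an explicit preliminary lemma, which the paper uses implicitly; this is a welcome clarification but not a different argument.
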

\begin{proof}
Let $I$ be an isolated vertex in $SII(R)$. If $I$ is not a minimal ideal, then it is
properly contains a minimal ideal, say $M$ and $I \cap M = M$. Thus $I$ is adjacent to $M$ in $SII(R)$,
a contradiction, and so $I$ is a minimal ideal. If $I$ is not a maximal ideal, then there
exists an ideal $J$ of $R$ such that $I \subset J \subset R$ and $I \cap J = I$, which is a
second ideal. Hence $I$ is adjacent to $J$, a contradiction, and so $I$ is a maximal ideal of $R$.

Conversely, let $I$ be a minimal as well as maximal ideal of $R$. If possible, assume
that $I$ is not isolated in $SII(R)$. Then there exists a non-zero proper ideal $J$ of $R$ other than $I$ such that $I \cap J$ is a second ideal. If $I \not \subseteq J$, then as $I$ is minimal ideal of $R$, we
have $I\cap J =0$, which is not  a second ideal. On the other hand if $I \subseteq J$, then by maximality
of $I$, we have $J =R$ or $I = J$, a contradiction. Thus such an ideal $J$ does not
exist and hence $I$ is an isolated vertex in $SII(R)$.
\end{proof}

\begin{thm}\label{2.6}
Let $R$ be a commutative ring in which every ideal contains a minimal ideal. Then
$SII(R)$ is complete if and only if $R$ has exactly one minimal ideal and every non-zero
non-second ideal is a maximal ideal.
\end{thm}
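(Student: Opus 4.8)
The plan is to prove both implications directly, leaning on the standing hypothesis that every nonzero ideal of $R$ contains a minimal ideal. Throughout I will use two elementary facts: a second ideal is by definition nonzero, and two distinct minimal ideals $M_1, M_2$ satisfy $M_1 \cap M_2 = 0$ (their intersection is a proper subideal of each, hence zero by minimality). I will also repeatedly exploit the simplest instance of adjacency: whenever $I \subseteq J$ are distinct proper ideals, $I \cap J = I$, so the edge between $I$ and $J$ exists precisely when $I$ itself is a second ideal.

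For the reverse implication, assume $R$ has a unique minimal ideal $M$ and that every nonzero proper non-second ideal is maximal. Given two distinct nonzero proper ideals $I, J$, I would first show $I \cap J \neq 0$: by hypothesis each of $I, J$ contains a minimal ideal, and since $M$ is the only one, $M \subseteq I$ and $M \subseteq J$, whence $M \subseteq I \cap J$. Now $I \cap J$ is a nonzero proper ideal. If it is second we are done; otherwise, by the maximality hypothesis $I \cap J$ is maximal, and since $I \cap J \subseteq I \subsetneq R$ and $I \cap J \subseteq J \subsetneq R$, maximality forces $I = I \cap J = J$, contradicting $I \neq J$. Hence $I \cap J$ is always second and $SII(R)$ is complete.

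For the forward implication, assume $SII(R)$ is complete. To obtain statement (a), note that if $R$ had two distinct minimal ideals $M_1, M_2$ then $M_1 \cap M_2 = 0$ is not second, so $M_1$ and $M_2$ would be non-adjacent, contradicting completeness; and $R$ has at least one minimal ideal because any vertex contains one. So $R$ has exactly one minimal ideal. To obtain statement (b), take a nonzero proper non-second ideal $I$ and suppose toward a contradiction that it is not maximal, so there is $J$ with $I \subsetneq J \subsetneq R$. Then $I, J$ are distinct vertices, so completeness forces $I \cap J = I$ to be second, contradicting the choice of $I$. Hence every nonzero proper non-second ideal is maximal.

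The computations here are light; the main thing to get right is the reverse direction, where the crux is guaranteeing that $I \cap J$ is nonzero — this is exactly where the uniqueness of the minimal ideal together with the ``every ideal contains a minimal ideal'' hypothesis does the essential work, after which the maximality hypothesis is precisely what upgrades ``nonzero'' to ``second.'' I would also keep an eye on the degenerate case where $R$ has no nonzero proper ideals (so $SII(R)$ has no vertices), treating it as vacuous, and read ``ideal'' in condition (b) as ``proper ideal,'' which is all that is ever applied since an intersection of proper ideals is proper.
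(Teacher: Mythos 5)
Your proof is correct and follows essentially the same route as the paper: the reverse direction uses the identical argument (the unique minimal ideal forces $I \cap J \neq 0$, and maximality of a non-second intersection collapses $I = I\cap J = J$), and the forward direction uses the same two observations (distinct minimal ideals have zero, hence non-second, intersection; a non-maximal non-second $I$ sits inside some $J$ with $I \cap J = I$ not second). The only cosmetic difference is that you argue the uniqueness of the minimal ideal directly rather than routing it through the paper's Theorem~\ref{2.2}, which is a harmless (arguably cleaner) shortcut.
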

\begin{proof}
Let $SII(R)$ be a complete graph. Then by Theorem \ref{2.2}, $SII(R)$ has a universal vertex and
hence either one of the two conditions holds. Since two distinct minimal
ideals cannot be adjacent, $R$ cannot have two minimal ideals. Hence part (a) of
Theorem \ref{2.2} holds. Let $I$ be a non-zero proper ideal of $R$ which is not second. If
possible, there exists an ideal $J$ of $R$ such that $I \subset J \subset R$, then $I \cap J = I$. Since
$I$ is not a second ideal, $I$ is not adjacent to $J$, a contradiction to the completeness of $SII(R)$. Thus
every non-zero non-second ideal is a maximal ideal.

Conversely, let $R$ has exactly one minimal ideal, say $M$, and every
non-zero non-second ideal be a maximal ideal. Let $I_1$ and $I_2$ be two distinct non-zero
proper ideals of $R$. Then $M \subseteq I_1$ and $M\subseteq I_2$. Suppose $M\subseteq I_1 \cap I_2 = I_3$ is not a second ideal of $R$. Then by given condition, $I_3$ is a maximal ideal. But $I_3 \subseteq I_1, I_2$ and $I_1$, $I_2$ are proper
ideals of $R$.  Therefore, $I_1 = I_2 = I_3$, which is a contradiction. Thus $I_1 \cap I_2$ is second and so $I_1$ is adjacent to $I_2$. Hence, $SII(R)$ is complete.
\end{proof}

\begin{ex}\label{2.7996}
Let $k\geq 4$ be a positive integer.
Consider the ring $\Bbb Z_{p^k}$, where $p$ is prime. Since $p^2\Bbb Z_{p^k}$ is a non-second and non-maximal ideal of $\Bbb Z_{p^k}$, we have $SII(\Bbb Z_{p^k})$ is not a complete graph by Theorem \ref{2.6}.
\end{ex}

\begin{cor}\label{2.76}
Let $n$ be a positive integer. Then
 $SII(\Bbb Z_n)$ is a complete graph if and only if $n=p^k$, where $p$ is prime and $k=2$ or $k=3$.
\end{cor}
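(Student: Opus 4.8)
The plan is to reduce everything to Theorem \ref{2.6}. Since $\Bbb Z_n$ is a finite ring, it is Artinian, so every ideal contains a minimal ideal and Theorem \ref{2.6} applies. Thus $SII(\Bbb Z_n)$ is complete if and only if $\Bbb Z_n$ has exactly one minimal ideal \emph{and} every non-zero non-second ideal of $\Bbb Z_n$ is maximal. My whole task is then to translate each of these two conditions into an arithmetic statement about $n$.

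First I would count the minimal ideals of $\Bbb Z_n$. The ideals of $\Bbb Z_n$ are exactly the $\langle m\rangle$ with $m\mid n$, and $\langle m\rangle\subseteq\langle m'\rangle$ if and only if $m'\mid m$. From this, a non-zero $\langle m\rangle$ is minimal precisely when $n/m$ is prime; hence the number of minimal ideals of $\Bbb Z_n$ equals the number of distinct prime divisors of $n$, the only exception being that when $n$ is itself prime there are no non-zero proper ideals at all (and so no vertices). Consequently $\Bbb Z_n$ has exactly one minimal ideal if and only if $n=p^{k}$ for a single prime $p$ and some $k\geq 2$; this already excludes both $n$ prime and $n$ divisible by two or more distinct primes.

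Next, assuming $n=p^{k}$ with $k\geq 2$, I would lay out the remaining structure of $\Bbb Z_{p^{k}}$. Its non-zero proper ideals are $\langle p\rangle,\langle p^{2}\rangle,\dots,\langle p^{k-1}\rangle$, its unique maximal ideal is $\langle p\rangle$, and a short check shows its only second ideal is the socle $\langle p^{k-1}\rangle$: for $i<k-1$, multiplication by $p$ carries $\langle p^{i}\rangle$ onto $\langle p^{i+1}\rangle$, which is neither $0$ nor $\langle p^{i}\rangle$, whereas for $\langle p^{k-1}\rangle$ every element of $R$ acts as either a unit or as $0$. Therefore the non-second non-zero proper ideals are exactly $\langle p\rangle,\dots,\langle p^{k-2}\rangle$, and demanding that each of these be maximal (that is, equal to $\langle p\rangle$) forces $k-2\leq 1$, i.e.\ $k\leq 3$. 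Combining this with the previous paragraph gives $n=p^{2}$ or $n=p^{3}$, and conversely for those two values both conditions of Theorem \ref{2.6} are immediate, so $SII(\Bbb Z_n)$ is complete.

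The bookkeeping with the chain $\langle p\rangle\supset\langle p^{2}\rangle\supset\cdots$ is routine; the only point deserving genuine care is the characterisation of the second ideals of $\Bbb Z_{p^{k}}$, because the entire reduction to $k\leq 3$ rests on $\langle p^{k-1}\rangle$ being the \emph{sole} second ideal. This is exactly the phenomenon already recorded in Example \ref{2.7996}, where $\langle p^{2}\rangle$ is singled out as a non-second non-maximal ideal once $k\geq 4$; so for the forward direction I can simply cite that example to rule out $k\geq 4$, and the reverse direction needs only the two short verifications for $k=2$ and $k=3$.
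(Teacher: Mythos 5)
Your proof is correct, but it takes a genuinely different route from the paper's. The paper disposes of the corollary in one line: since $SII(\Bbb Z_n)$ is a spanning subgraph of the intersection graph $\Gamma(\Bbb Z_n)$ (Remark \ref{2.661}), completeness of $SII(\Bbb Z_n)$ forces completeness of $\Gamma(\Bbb Z_n)$, and the cited external result \cite[Theorem 2.9]{ITS09} says the latter happens only when $n$ is a prime power; Example \ref{2.7996} then eliminates $k\geq 4$, and the cases $k=2,3$ are immediate. You instead stay entirely inside the paper: you invoke Theorem \ref{2.6} (valid since $\Bbb Z_n$ is Artinian) and translate its two conditions arithmetically --- exactly one minimal ideal if and only if $n=p^k$ with $k\geq 2$ (via the divisor-lattice description of the ideals of $\Bbb Z_n$, with the degenerate case $n$ prime correctly excluded), and every non-zero non-second ideal maximal if and only if $k\leq 3$ (via the identification of $\langle p^{k-1}\rangle$ as the unique second ideal of $\Bbb Z_{p^k}$, which is the content of Example \ref{0190}). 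What your approach buys is self-containedness and completeness of detail: it avoids the external characterization of complete intersection graphs and makes explicit the verification of both conditions of Theorem \ref{2.6}, including the easy converse for $k=2,3$ that the paper leaves implicit. What the paper's approach buys is brevity. Both arguments ultimately hinge on the same key fact, namely that $\langle p^{k-1}\rangle$ is the sole second ideal of $\Bbb Z_{p^k}$, so that $\langle p^2\rangle$ is non-second and non-maximal once $k\geq 4$.
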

\begin{proof}
This follows from \cite[Theorem 2.9]{ITS09} and Example \ref{2.7996}.
\end{proof}

\begin{thm}\label{2.799}
Let $n$ be a positive integer. Then
the graph $SII(\Bbb Z_n)$ is disconnected if and only if $n=pq$, where $p$ and $q$ are distinct primes.
\end{thm}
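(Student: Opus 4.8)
My plan is to reduce the whole question to the combinatorics of the divisors of $n$. Write $n=p_1^{a_1}\cdots p_k^{a_k}$ with the $p_i$ distinct primes, so that the vertices of $SII(\Bbb Z_n)$ are the ideals $\langle d\rangle$ with $d\mid n$ and $1<d<n$. The first step is to pin down the second ideals: for $d\mid n$ one has $r\langle d\rangle=\langle d\,\gcd(r,n/d)\rangle$, so $\langle d\rangle$ is a second ideal exactly when $\gcd(r,n/d)\in\{1,n/d\}$ for every $r$, i.e.\ exactly when $n/d$ is prime. Hence the second ideals of $\Bbb Z_n$ are precisely $S_i:=\langle n/p_i\rangle$ for $i=1,\dots,k$, and these coincide with the minimal ideals of $\Bbb Z_n$. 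Since $\langle d_1\rangle\cap\langle d_2\rangle=\langle\operatorname{lcm}(d_1,d_2)\rangle$, two vertices are adjacent iff $n/\operatorname{lcm}(d_1,d_2)$ is prime.

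Writing $d=\prod_i p_i^{b_i}$ with $0\le b_i\le a_i$, a short valuation computation gives the clean adjacency rule I will rely on: for $d\neq n/p_i$, the vertex $\langle d\rangle$ is adjacent to $S_i$ iff $b_i<a_i$, since the exponent vector of $\operatorname{lcm}(d,n/p_i)$ is then exactly that of $n/p_i$. The crucial consequence is that \emph{every} vertex is adjacent to at least one $S_i$: if $\langle d\rangle$ were adjacent to no second ideal then $b_i=a_i$ for all $i$, forcing $d=n$, which is impossible for a vertex. Therefore every connected component of $SII(\Bbb Z_n)$ contains at least one second ideal, and since every vertex is adjacent to (or equal to) some $S_i$, the graph is connected if and only if all of $S_1,\dots,S_k$ lie in a single component.

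It remains to decide when the second ideals are mutually connected. Distinct minimal ideals are never adjacent (their intersection is $\langle n\rangle=0$), so I will instead join $S_i$ and $S_j$ through a common neighbour, which yields the path $S_i-\langle d\rangle-S_j$. By the rule above, $\langle d\rangle$ is a common neighbour of $S_i$ and $S_j$ iff $b_i<a_i$ and $b_j<a_j$, subject only to $1<d<n$. When $k\ge 3$, choosing $b_i=b_j=0$, $b_\ell=a_\ell$ for one fixed index $\ell\neq i,j$, and the remaining exponents arbitrarily produces such a $d$; when $k=2$ and $n\neq pq$ (so $a_1\ge 2$ or $a_2\ge 2$), one of $\langle p_1\rangle$, $\langle p_2\rangle$ serves as a common neighbour of $S_1$ and $S_2$. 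In either case all the $S_i$ fall into one component, so $SII(\Bbb Z_n)$ is connected; the remaining case $k=1$ (a prime power) is the star of Example \ref{0190}, hence connected as well.

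For the converse, suppose $n=pq$. Then the only vertices are $\langle p\rangle$ and $\langle q\rangle$, and each is simultaneously a minimal and a maximal ideal, so by Theorem \ref{2.5} both are isolated; a two-vertex graph with no edge is disconnected. Combining the two directions, $SII(\Bbb Z_n)$ is disconnected precisely when $n=pq$. I expect the only delicate point to be the construction of a genuine common neighbour in the connectivity step: it must be a \emph{proper nonzero} ideal, i.e.\ a divisor strictly between $1$ and $n$, and it is exactly the impossibility of producing such a divisor (the forced choice $d=1$ when $a_1=a_2=1$) that singles out $n=pq$ as the disconnected case.
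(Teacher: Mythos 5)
Your proof is correct, and it is in fact more complete than the paper's own argument. The paper handles $n=pq$ identically (the two vertices $\langle p\rangle$ and $\langle q\rangle$ are isolated), but for the converse it writes $n=p_1p_2\cdots p_k$ with primes repeated according to multiplicity, and for $k\geq 3$ merely observes that the second ideal $\langle p_2\cdots p_k\rangle$ is adjacent to $\langle p_2\rangle$ before asserting that the whole graph is connected; exhibiting a single edge does not establish connectivity, so the paper's written proof has a gap (repairable by citing its Theorem \ref{2.7}, since $\Bbb Z_n$ is Artinian and is a direct sum of two of its minimal ideals exactly when $n=pq$ with $p\neq q$ prime). Your argument supplies precisely the missing infrastructure, in a self-contained way: the identification of the second ideals of $\Bbb Z_n$ as the minimal ideals $S_i=\langle n/p_i\rangle$, the $\operatorname{lcm}$/valuation adjacency criterion, the ``hub'' observation that every vertex is adjacent to (or equal to) some $S_i$, and the common-neighbour construction joining $S_i$ to $S_j$, with the case split $k\geq 3$ versus $k=2$, $n\neq pq$ correctly isolating $n=pq$ as the only failure. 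Note that your common neighbour (any $d$ with $b_i<a_i$ and $b_j<a_j$) specializes, for the choice $d=n/(p_ip_j)$, to the ideal $S_i+S_j$; this is exactly the connecting vertex $M_1+M_2$ in the path $I_1 - (M_1+M_2) - I_2$ from the proof of Theorem \ref{2.7}, so your route can be viewed as an explicit divisor-theoretic instantiation of that general argument. What your version buys is rigour and independence from the ring-theoretic machinery; what the paper's (at best implicit) route via Theorem \ref{2.7} buys is brevity and the sharper conclusion $\mathrm{diam}(SII(\Bbb Z_n))\leq 2$, which your hub-to-hub paths do not immediately yield.
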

\begin{proof}
Clearly, the graph $SII(\Bbb Z_{pq})$, where $p$ and $q$ are distinct primes is disconnected.
Now, let $n=p_1p_2...p_k$, where $p_i$'s are primes but may not be all distinct.
First assume that $k\geq 3$.
Set $m=p_2...p_k$. One can see that $\langle m\rangle$ is a second ideal of $R$ and $\langle m\rangle$ is adjacent to
$\langle p_2\rangle$. Therefore, for $k\geq 3$, the graph $SII(\Bbb Z_n)$ is connected.
If $k=2$ and $p_1=p_2$, then $SII(\Bbb Z_{p_1^2})$ contains only a single point and hence
connected. Therefore $n$ is of the form $pq$, where $p$ and $q$ are distinct primes.
\end{proof}

\begin{thm}\label{2.7}
Let $R$ be a commutative ring in which every ideal contains a minimal ideal. Then $SII(R)$ is connected if and only if $R$ is not a
direct sum of two of its minimal ideals. If $SII(R)$ is connected, then diam$(SII(R)) \leq 2$.
\end{thm}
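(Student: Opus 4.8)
The plan is to prove both implications together with the diameter bound, relying on two elementary facts that are used implicitly throughout the paper: every minimal ideal $M$ is a second ideal (for $r\in R$, $rM\subseteq M$ is an ideal, so $rM=0$ or $rM=M$ by minimality), and two distinct minimal ideals $M,N$ satisfy $M\cap N=0$ (as $M\cap N$ is an ideal contained in the minimal ideal $M$, and cannot equal $M$ lest $M\subseteq N$).

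For the ``only if'' direction I would argue by contraposition: assume $R=M_1\oplus M_2$ is the internal direct sum of two minimal ideals. Writing $1=e_1+e_2$ with $e_i\in M_i$ produces orthogonal idempotents (since $M_1M_2\subseteq M_1\cap M_2=0$), so $R\cong e_1R\times e_2R$ with each $e_iR=M_i$ a field; hence $R/M_1\cong M_2$ is a field and $M_1$ is maximal as well as minimal, and likewise for $M_2$. By Theorem \ref{2.5} both $M_1$ and $M_2$ are then isolated vertices, so $SII(R)$ is disconnected. Thus connectedness forces $R$ not to be such a direct sum. (Alternatively one notes directly that $M_1,M_2$ are the only non-zero proper ideals and $M_1\cap M_2=0$ is not second.)

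For the ``if'' direction, together with the bound $\mathrm{diam}(SII(R))\le 2$, I would fix two distinct vertices $I,J$ and show $d(I,J)\le 2$ by exhibiting a common neighbour (or an edge). Since every ideal contains a minimal ideal, both $I$ and $J$ contain at least one. The easy case is when $I$ and $J$ contain a common minimal ideal $M$: then $I\cap M=M$ and $J\cap M=M$ are second, so $I-M-J$ is a path of length at most $2$ (and if $I=M$ or $J=M$ one gets an edge directly, since $M\subseteq J$ gives $I\cap J=M$).

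The crux is the case where $I$ and $J$ share no common minimal ideal. Here I would pick minimal ideals $M\subseteq I$ and $N\subseteq J$ with $M\neq N$; then $N\not\subseteq I$ and $M\not\subseteq J$, so $I\cap N=0$ and $J\cap M=0$ by minimality. Taking $K:=M+N$, a short computation shows $I\cap K=M$: any $x=m+n\in I\cap K$ with $m\in M,\ n\in N$ has $n=x-m\in I\cap N=0$, whence $x\in M$, while the reverse inclusion is clear. Thus $I\cap K=M$ is a minimal, hence second, ideal, and symmetrically $J\cap K=N$ is second, so $K$ is adjacent to both $I$ and $J$. The one remaining point—and the only place the hypothesis is really used—is that $K$ must be a genuine vertex, i.e.\ a proper ideal: if $M+N=R$ then $M\cap N=0$ would give $R=M\oplus N$, contradicting that $R$ is not a direct sum of two minimal ideals. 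Since $K\supseteq N$ forces $K\neq I$ and $K\supseteq M$ forces $K\neq J$, the path $I-K-J$ has length $2$. Combining the two cases yields connectedness and $\mathrm{diam}(SII(R))\le 2$. I expect this last case, and in particular verifying $I\cap K=M$ and the properness of $K$, to be the main obstacle.
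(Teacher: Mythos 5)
Your proposal is correct and follows essentially the same route as the paper: when $I$ and $J$ contain a common minimal ideal one gets a path of length at most two through it, and otherwise $K=M+N$ serves as the common neighbour, with properness of $K$ guaranteed precisely by the hypothesis that $R$ is not a direct sum of two of its minimal ideals, and the converse handled by noting that such a direct sum has only two non-zero proper ideals, which are isolated vertices. If anything, your write-up is slightly more careful than the paper's: the paper asserts $I_1\cap(M_1+M_2)=(I_1\cap M_1)+(I_1\cap M_2)$, which is not a general identity and holds here only via the modular law (since $M_1\subseteq I_1$) --- exactly the element-wise computation you carry out --- and the paper omits checking that $M_1+M_2$ is distinct from $I_1$ and $I_2$, which you verify explicitly.
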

\begin{proof}
Let $I_1$, $I_2$ be two proper ideals of $R$. If $I_1\cap I_2$ is a second ideal, then $I_1- I_2$
in $SII(R)$. Otherwise, assume that $I_1\cap I_2$ is not a second ideal. By assumption, both ideals $I_1$ and $I_2$ are contain a minimal ideals of $R$. If they are contain the same minimal ideal, say $M$, then we have $I_1 - M - I_2$, as minimal ideals
are second and hence $d(I_1, I_2) = 2$. Thus, we assume that they are not contain the same minimal ideals, $M_1\subset I_1$, $M_2\subset I_2$ and $M_1 \not= M_2$ are two minimal ideals of $R$.
First assume that $M_1+ M_2 \not=R$. By the minimality of $M_2$, we have $M_2 \cap I_1=0$.
Then $I_1 \cap (M_1+M_2)=I_1\cap M_1+I_1\cap M_2=M_1$ and so $I_1 -  M_1 + M_2$. Similarly,
 $M_1 + M_2-I_2$. Thus we have a path of length 2 given
by $I_1 -  M_1 + M_2 - I_2$ and so  $d(I_1, I_2) \leq 2$. Hence,  diam$(SII(R)) \leq 2$. If $M_1+ M_2 =R$, then since $M_1$ and $M_2$ are minimal ideals, the sum is directed.

Conversely, if $R$ is direct sum of two minimal ideals $M_1$ and $M_2$, the only non-trivial ideals
of $R$ are ${0}+M_2$ and $M_1+{0}$. In this case, $SII(R)$ consists of two isolated vertices
and hence it is not connected.
\end{proof}

Recall that $R$ is said to be a \textit{comultiplication ring} if for each ideal $I$ of $R$, we have $I=Ann_R(Ann_R(I))$ \cite{MR3934877}.
\begin{cor}\label{2.8}
Let $R$ be a comultiplication ring in which $R$ is not a
direct sum of two of its minimal ideals. Then diam$(SII(R)) \leq 2$.
\end{cor}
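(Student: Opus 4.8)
The plan is to derive Corollary \ref{2.8} directly from Theorem \ref{2.7}, with the role of the comultiplication hypothesis being to guarantee that the relevant ideal structure (existence of minimal ideals inside every ideal) is in place so that Theorem \ref{2.7} applies. First I would recall that in a comultiplication ring every ideal $I$ satisfies $I=Ann_R(Ann_R(I))$; the key consequence I want to extract is that such a ring has the property that every non-zero ideal contains a minimal ideal. Indeed, for a comultiplication ring this is a standard fact: the annihilator conditions force minimality to propagate downward, so that any non-zero ideal sits above a simple (minimal) submodule of $R$. Establishing this containment property is the one genuine verification step, since Theorem \ref{2.7} is stated precisely under that hypothesis.

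Once the containment property is in hand, the argument is immediate. By hypothesis $R$ is not a direct sum of two of its minimal ideals, so the second half of Theorem \ref{2.7} applies verbatim: $SII(R)$ is connected and $\mathrm{diam}(SII(R))\leq 2$. I would simply invoke Theorem \ref{2.7} and read off the diameter bound, observing that the ``not a direct sum of two minimal ideals'' hypothesis of the corollary is exactly the hypothesis appearing in the theorem.

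I expect the main (and essentially only) obstacle to be the verification that a comultiplication ring satisfies the ``every ideal contains a minimal ideal'' hypothesis of Theorem \ref{2.7}, since this is what licenses the application of the theorem. If that implication is already available in the cited reference \cite{MR3934877} or follows quickly from the defining identity $I=Ann_R(Ann_R(I))$, then the proof collapses to a one-line appeal to Theorem \ref{2.7}. Absent that, one would need a short lemma showing that the comultiplication identity forces each non-zero ideal to have a minimal ideal beneath it; this is the substantive content that the corollary quietly relies on, and everything else is a direct citation.
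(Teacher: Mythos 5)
Your proposal matches the paper's proof exactly: the paper cites \cite[Theorem 7]{MR3934877} for the fact that every ideal in a comultiplication ring contains a minimal ideal, and then invokes Theorem \ref{2.7} to get $\mathrm{diam}(SII(R))\leq 2$. The one verification step you flagged as the potential obstacle is indeed handled by that citation, so your argument is complete and identical in structure to the paper's.
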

\begin{proof}
Every ideal in a comultiplication ring contains a minimal ideal by \cite[Theorem 7]{MR3934877}. Now the result follows from Theorem \ref{2.7}.
\end{proof}

In \cite [Theorem 2.6]{saha2023prime}, it is shown that the graph $PIS(R)$ is connected if and only if $R$ is not a
direct sum of two fields. If $PIS(R)$ is connected, then diam($PIS(R)) \leq 4$. Moreover,
if $R$ is a principal ideal ring, then diam($PIS(R)) \leq 2$. But as we see in the following theorem (the most of its proof is the proof of \cite[Theorem 2.6]{saha2023prime}) it is not necessary for $R$ to be a principal ideal ring.
\begin{thm}\label{2.5}
The graph $PIS(R)$ is connected if and only if $R$ is not a
direct sum of two fields. If $PIS(R)$ is connected, then diam($PIS(R)) \leq 2$. 
\end{thm}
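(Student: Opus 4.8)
The plan is to prove both implications directly, establishing connectivity and the diameter bound at once. For the easy (converse) direction I would first observe that if $R = F_1 \oplus F_2$ is a direct sum of two fields, then its only non-zero proper ideals are $F_1 \oplus 0$ and $0 \oplus F_2$; their sum is all of $R$, which is not a proper ideal and hence not prime, so $PIS(R)$ consists of two isolated vertices and is disconnected. This disposes of one direction and also explains why the hypothesis is exactly ``not a direct sum of two fields''.

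For the forward direction, assume $R$ is not a direct sum of two fields and fix two distinct non-zero proper ideals $I$ and $J$. Since $R$ has an identity, every proper ideal is contained in a maximal ideal, and I would split into three cases according to the ideal $I + J$. If $I + J$ is prime, then $I$ and $J$ are already adjacent and $d(I,J) = 1$. If $I + J$ is a proper but non-prime ideal, choose a maximal ideal $\mathfrak{m} \supseteq I + J$; then $I + \mathfrak{m} = \mathfrak{m} = J + \mathfrak{m}$ are prime, and $\mathfrak{m} \neq I, J$ (otherwise $I + J$ would equal the prime ideal $\mathfrak{m}$, contrary to assumption), so $I - \mathfrak{m} - J$ gives $d(I,J) = 2$. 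These two cases are essentially the argument of \cite[Theorem 2.6]{saha2023prime}.

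The crux, and the step where the earlier bound of $4$ is sharpened to $2$, is the comaximal case $I + J = R$. Here I would pick maximal ideals $\mathfrak{m} \supseteq I$ and $\mathfrak{n} \supseteq J$; comaximality forces $\mathfrak{m} \neq \mathfrak{n}$, $I \not\subseteq \mathfrak{n}$, and $J \not\subseteq \mathfrak{m}$. The key idea is to take as common neighbour the ideal $K := \mathfrak{m} \cap \mathfrak{n}$: by Dedekind's modular law (using $I \subseteq \mathfrak{m}$ and $I \not\subseteq \mathfrak{n}$) one computes $I + K = (I + \mathfrak{n}) \cap \mathfrak{m} = R \cap \mathfrak{m} = \mathfrak{m}$, and symmetrically $J + K = \mathfrak{n}$, both prime; moreover $K \notin \{I, J\}$ since $K \subseteq \mathfrak{n}$ while $I \not\subseteq \mathfrak{n}$ (and dually for $J$). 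This produces the path $I - K - J$ of length $2$.

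The one thing that must be checked for $K$ to be a legitimate vertex is that $K = \mathfrak{m} \cap \mathfrak{n} \neq 0$, and this is precisely where the hypothesis enters: if $\mathfrak{m} \cap \mathfrak{n} = 0$, then since $\mathfrak{m} + \mathfrak{n} = R$ the Chinese Remainder Theorem yields $R \cong R/\mathfrak{m} \times R/\mathfrak{n}$, a direct sum of two fields, contradicting our assumption. Combining the three cases shows $PIS(R)$ is connected with $\mathrm{diam}(PIS(R)) \le 2$, which together with the converse proves the theorem. I expect the main obstacle to be identifying the correct common neighbour in the comaximal case; the modular-law identity and the CRT non-vanishing argument are the two ingredients that force the diameter down to $2$ rather than $4$, and neither requires $R$ to be a principal ideal ring.
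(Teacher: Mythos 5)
Your proof is correct and takes essentially the same route as the paper's: both split into the cases where $I+J$ is prime, where $I+J$ lies in a (common) maximal ideal, and where $I+J=R$, and in the comaximal case both use the modular law to show that $\mathfrak{m}\cap\mathfrak{n}$ is a common neighbour, with the case $\mathfrak{m}\cap\mathfrak{n}=0$ ruled out because (by the Chinese Remainder Theorem) it would make $R$ a direct sum of two fields. The only difference is cosmetic: you verify the vertex-distinctness conditions ($\mathfrak{m}\neq I,J$ and $K\neq I,J$) explicitly, which the paper leaves implicit.
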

\begin{proof}
Let $I_1$, $I_2$ be two non-zero ideals of $R$. If $I_1+ I_2$ is a prime ideal, then $I_1- I_2$
in $PIS(R)$. Otherwise, assume that $I_1+ I_2$ is not a prime ideal. Both ideals $I_1$ and $I_2$ are contained in some maximal ideals of $R$. If they are contained in the same maximal ideal, say $M$, then we have $I_1 - M - I_2$, as maximal ideals
are prime ideals and hence $d(I_1, I_2) = 2$. Thus, we assume that they are not contained in the same maximal ideals, $I_1\subset M_1$, $I_2\subset M_2$ and $M_1 \not= M_2$ are two maximal ideals of $R$.
First assume that $M_1\cap M_2 \not=0$. By the maximality of $M_2$, we have $M_2 + I_1=R$.
Then 
$$
M_1=M_1 \cap R=M_1 \cap (M_2+I_1)=(I_1\cap M_1)+(M_1\cap M_2)=I_1+(M_1 \cap M_2)
$$
and so $I_1 -  M_1 \cap M_2$. Similarly,
 $M_1 \cap M_2-I_2$. Thus we have a path of length 2 given
by $I_1 -  M_1 \cap M_2 - I_2$ and so  $d(I_1, I_2) \leq 2$. Hence, diam($PIS(R)) \leq 2$. If $M_1\cap M_2 =0$, then since $M_1$ and $M_2$ are maximal ideals, $R$ is a direct sum of two fields.

Conversely, if $R$ is direct sum of two two fields $F_1$ and $F_2$, the only non-trivial ideals
of $R$ are ${0}+F_2$ and $F_1+{0}$. In this case, $PIS(R)$ consists of two isolated vertices
and hence it is not connected.
\end{proof}

\begin{thm}\label{2.9}
If any two non-comparable ideals are adjacent in $SII(R)$, then
girth($SII(R)$) = 3.
\end{thm}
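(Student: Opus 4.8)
The plan is to exhibit a single $3$-cycle explicitly, which forces the girth to equal $3$ since the girth of any simple graph is at least $3$. The key idea is that a non-comparable pair, together with its intersection, already supplies a triangle: the hypothesis makes the pair adjacent, so their intersection is a second ideal, and that intersection sits inside both members of the pair and hence is adjacent to each of them.

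First I would fix two non-comparable non-zero proper ideals $I$ and $J$ of $R$. By hypothesis $I$ and $J$ are adjacent, so $S:=I\cap J$ is a second ideal of $R$; in particular $S\neq 0$, and $S\subseteq I\subset R$ shows that $S$ is a non-zero proper ideal, hence a vertex of $SII(R)$. Next I would verify that $I$, $J$, $S$ are three distinct, pairwise adjacent vertices. If $S=I$ then $I=I\cap J\subseteq J$, contradicting non-comparability; symmetrically $S\neq J$, and $I\neq J$ is immediate. Because $S\subseteq I$ we get $S\cap I=S$, a second ideal, so $S-I$ is an edge; likewise $S\cap J=S$ gives $S-J$. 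Combined with the edge $I-J$ furnished by the hypothesis, this yields the cycle $I-J-S-I$ of length $3$. Therefore $SII(R)$ contains a triangle and girth$(SII(R))=3$.

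I expect the only delicate point to be the degenerate situation in which $R$ admits no non-comparable pair at all, i.e.\ the ideals of $R$ form a chain (for instance $R=\Bbb Z_{p^k}$, whose graph is a star of girth $\infty$ by Example \ref{0190}). In that case the hypothesis holds vacuously while no triangle exists, so the argument above tacitly assumes the presence of at least one non-comparable pair of ideals. Once such a pair is granted, no case analysis is needed: the triangle $I-J-(I\cap J)-I$ is forced, and the conclusion follows at once.
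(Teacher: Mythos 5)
Your proof is correct and takes essentially the same route as the paper's: fix the adjacent non-comparable pair $I$, $J$, note that $S=I\cap J$ is a second ideal, and observe that $I$, $J$, $S$ are three distinct pairwise adjacent vertices forming a triangle. Your explicit checks (that $S\neq I,J$ follows from non-comparability, that $S\cap I=S$ and $S\cap J=S$ give the two remaining edges, and that the hypothesis must be read as asserting the existence of such a pair, as in Corollary \ref{2.10}) merely spell out details the paper leaves implicit.
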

\begin{proof}
Let $I_1$ and $I_2$ be two non-comparable ideals which are adjacent in $SII(R)$.
Then $I_1\cap I_2$ is a second ideal of $R$. Since $I_1$ and $I_2$ are non-comparable, then $I_1$, $I_2$, and $I_1\cap I_2$ forms a triangle in $SII(R)$, i.e. girth($SII(R)$) = 3.
\end{proof}

\begin{cor}\label{2.10}
If $SII(R)$ is acyclic or girth$(SII(R)) > 3$, then no two non-comparable
ideals of $R$ are adjacent in $SII(R)$ and adjacency occurs only in case
of comparable ideals, i.e. for any edge in $SII(R)$, one of the terminal vertices is a
second ideal of $R$.
\end{cor}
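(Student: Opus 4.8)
The plan is to obtain this statement as the contrapositive of Theorem \ref{2.9}, supplemented by one elementary observation about comparable ideals. First I would note that the hypothesis ``$SII(R)$ is acyclic or girth$(SII(R)) > 3$'' forces, in either case, that girth$(SII(R)) \neq 3$. Indeed, if $SII(R)$ is acyclic then by the convention fixed in the introduction it contains no cycle, so girth$(SII(R)) = \infty$; and the alternative hypothesis girth$(SII(R)) > 3$ excludes the value $3$ outright. Thus under either branch of the hypothesis the girth is not $3$.

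Next I would invoke Theorem \ref{2.9} in contrapositive form. That theorem states that the existence of a single adjacent pair of non-comparable ideals in $SII(R)$ already forces girth$(SII(R)) = 3$. Since we have just established that girth$(SII(R)) \neq 3$, no two non-comparable ideals can be adjacent in $SII(R)$. This yields the first assertion of the corollary.

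For the second assertion I would take an arbitrary edge of $SII(R)$, joining two distinct vertices $I$ and $J$. By the previous step these vertices must be comparable, so without loss of generality $I \subset J$. Then $I \cap J = I$, and since $I$ and $J$ are adjacent, the definition of $SII(R)$ says that $I \cap J = I$ is a second ideal of $R$. Hence one of the two endpoints of every edge is a second ideal, exactly as claimed.

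The argument is routine once Theorem \ref{2.9} is available, and I do not expect any step to present a genuine obstacle. The only point meriting a little care is the acyclic case, where one must explicitly appeal to the stated convention that a graph without cycles has girth $\infty$, and hence in particular girth different from $3$, so that Theorem \ref{2.9} can be applied uniformly to both branches of the hypothesis.
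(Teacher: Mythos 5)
Your proposal is correct and matches the paper's intent: the corollary is stated there without proof precisely because it is the immediate contrapositive of Theorem~\ref{2.9}, supplemented by the observation that an edge between comparable ideals $I \subset J$ forces $I \cap J = I$ to be a second ideal. Your handling of the acyclic case via the convention girth$(SII(R)) = \infty$ is exactly the right way to make the contrapositive apply uniformly.
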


\begin{thm}\label{2.11}
If girth($SII(R)$) = n, then there exist at least
$\lfloor n/2 \rfloor$ distinct second ideals in $R$.
\end{thm}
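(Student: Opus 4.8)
The plan is to reduce the statement to a vertex-cover count on a shortest cycle, leveraging Corollary \ref{2.10}. Since girth$(SII(R)) = n$ is finite, there is a cycle $C : I_1 - I_2 - \cdots - I_n - I_1$ of length $n$ realizing the girth, and by the definition of a cycle its $n$ vertices are pairwise distinct ideals. Each edge $I_j - I_{j+1}$ witnesses that $I_j \cap I_{j+1}$ is a second ideal, so one naive idea is to collect the edge-intersections; but controlling when two such intersections coincide is awkward, so I would instead work with the vertices themselves.

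First I would dispose of the base case $n = 3$, where one only needs $\lfloor 3/2 \rfloor = 1$ second ideal: any edge of $C$, say $I_1 - I_2$, already forces $I_1 \cap I_2$ to be a second ideal, so such an ideal exists and the claim holds.

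For $n \geq 4$ the girth exceeds $3$, so Corollary \ref{2.10} applies to every edge of $SII(R)$, and in particular to every edge of $C$: each edge $I_j - I_{j+1}$ has an endpoint that is itself a second ideal. Let $A \subseteq \{I_1, \dots, I_n\}$ be the set of cycle-vertices that are second ideals. The previous sentence says precisely that $A$ meets every edge of $C$, i.e. $A$ is a vertex cover of the cycle $C_n$; equivalently, its complement $\{I_1, \dots, I_n\} \setminus A$ is an independent set of $C_n$. Since the independence number of $C_n$ equals $\lfloor n/2 \rfloor$, this complement has at most $\lfloor n/2 \rfloor$ vertices, whence $|A| \geq n - \lfloor n/2 \rfloor = \lceil n/2 \rceil \geq \lfloor n/2 \rfloor$. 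As the vertices of $C$ are distinct, $A$ consists of at least $\lceil n/2 \rceil$ distinct second ideals of $R$, which proves (and in fact slightly strengthens) the claim.

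The only real content is the observation that Corollary \ref{2.10} converts the property ``being a second ideal'' on endpoints into a covering condition on the edges of the shortest cycle; once this is in place the bound is just the standard fact $\alpha(C_n) = \lfloor n/2 \rfloor$. The hard part, or rather the one point demanding care, is that Corollary \ref{2.10} requires girth $> 3$, so the case $n = 3$ cannot be folded into the covering argument and must be treated separately as above — though there the required bound is only $1$ and is immediate.
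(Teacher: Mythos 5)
Your proposal is correct, and it follows the same overall strategy as the paper's proof: treat $n=3$ separately (where a single edge already yields one second ideal), and for $n\geq 4$ invoke Corollary \ref{2.10} to place the second ideals among the vertices of a shortest cycle. Where you differ is in how the count is finished. The paper walks along the cycle tracking inclusion directions: it first rules out a monotone chain of inclusions around the cycle, identifies a ``locally minimal'' vertex $I_2$ (which must be second since $I_1\cap I_2=I_2$), then does a two-case analysis on $I_3,I_4$ and concludes with ``continuing in this manner,'' which is somewhat informal. You instead observe that Corollary \ref{2.10} says exactly that the second-ideal vertices form a vertex cover of the cycle $C_n$, and then apply the standard fact that the independence number of $C_n$ is $\lfloor n/2\rfloor$, so any vertex cover has at least $\lceil n/2\rceil$ elements. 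This buys you two things: a fully rigorous replacement for the paper's iterative hand-wave, and a slightly stronger conclusion ($\lceil n/2\rceil$ rather than $\lfloor n/2\rfloor$ distinct second ideals when $n\geq 4$, which differ when $n$ is odd). The paper's case analysis, on the other hand, makes explicit \emph{which} vertices are second (the inclusion-minimal ones), information your covering argument does not need but also does not expose.
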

\begin{proof}
By Theorem \ref{2.9}, if two non-comparable ideals are adjacent in $SII(R)$,
then girth($SII(R)$) = 3 and the intersection of those two non-comparable ideals forms a
second ideal, and hence $R$ contains at least
$\lfloor 3/2 \rfloor=1$ second ideal. Thus, we assume
that girth$(SII(R)) > 3$, i.e. by Corollary \ref{2.10}, adjacency occurs only in case of
comparable ideals. Let $I_1 - I_2 - I_3- \cdots - I_n - I_1$ be a cycle of length $n$. First, we
observe that neither $I_1 \subseteq I_2 \subseteq  I_3\subseteq \cdots \subseteq I_n \subseteq I_1$ nor $I_1 \supseteq I_2 \supseteq I_3\supseteq \cdots\supseteq I_n \supseteq I_1$ can hold, as in both the cases all the ideals will be equal. Thus, without loss of
generality, we have $I_2 \subseteq  I_1$ , $I_2\subseteq  I_3$ and $I_2$ is a second ideal of $R$. Hence, we have the following
two cases:

\textbf{Case I.} Let $I_2 \subseteq  I_1, I_3$ and $I_4 \subseteq I_3$. Then, we have $I_2$, $I_4$ to be second ideals.

\textbf{Case II.} Let $I_2 \subseteq  I_1, I_3$ and $I_3 \subseteq I_4$. Then, we have $I_2$, $I_3$ to be second ideals.
In any case, we get at least 2 ideals to be second in $R$ among $I_1$, $I_2$, $I_3$ and $I_4$.
Continuing in this manner till $I_n$, we get at least
$\lfloor n/2 \rfloor$ ideals which are second in $R$.
\end{proof}

\begin{cor}\label{2.12}
Let $R$ has $k$ second ideals. Then $SII(R)$ is either acyclic or
girth$(SII(R)) \leq 2k$.
\end{cor}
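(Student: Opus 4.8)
The plan is to prove the contrapositive of Theorem \ref{2.11}. Suppose $SII(R)$ is not acyclic; then it contains a cycle, so its girth $n := {}$girth$(SII(R))$ is a finite integer with $n \geq 3$. The goal is to bound $n$ by the number $k$ of second ideals of $R$. I would apply Theorem \ref{2.11} to a shortest cycle: since girth$(SII(R)) = n$, that theorem produces at least $\lfloor n/2 \rfloor$ distinct second ideals, all of which lie among the $k$ second ideals of $R$, so that $\lfloor n/2 \rfloor \leq k$.

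From $\lfloor n/2 \rfloor \leq k$ one reads off the bound when $n$ is even, since then $n/2 \leq k$, i.e. $n \leq 2k$. The delicate case is $n$ odd, where $\lfloor n/2 \rfloor = (n-1)/2 \leq k$ yields only $n \leq 2k+1$. To close this gap for $n > 3$, I would sharpen the counting behind Theorem \ref{2.11} using Corollary \ref{2.10}: a cycle of length $n > 3$ uses only comparable adjacencies, so writing the shortest cycle as $I_1 - I_2 - \cdots - I_n - I_1$, every edge $I_a - I_b$ with $I_a \subseteq I_b$ forces $I_a \cap I_b = I_a$ to be second; thus the smaller endpoint of each edge is a second ideal, and the only vertices not forced to be second are the strict local maxima of the comparability pattern around the cycle. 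Since local maxima and local minima alternate, there are at most $\lfloor n/2 \rfloor$ maxima, hence at least $n - \lfloor n/2 \rfloor = \lceil n/2 \rceil$ of the (distinct) cycle vertices are second ideals. This upgrades the inequality to $\lceil n/2 \rceil \leq k$, and since $n \leq 2\lceil n/2 \rceil$ always holds, we obtain $n \leq 2k$ for every $n > 3$, irrespective of parity.

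The main obstacle is the boundary value $n = 3$, where the sharpened count is unavailable: by Theorem \ref{2.9} a triangle may arise from a non-comparable pair $I_1, I_2$ whose third vertex is exactly $I_1 \cap I_2$, and such a triangle forces only the single second ideal $I_1 \cap I_2$. I would therefore analyze the three comparability patterns of a triangle directly: if all three vertices are pairwise comparable they form a chain $A \subsetneq B \subsetneq C$ forcing both $A$ and $B$ to be second, and if exactly one pair is non-comparable with third vertex strictly below or above the intersection one again finds two distinct second ideals; only the pure non-comparable configuration forces a unique second ideal. Establishing that this last configuration cannot occur when $R$ has a single second ideal — equivalently, ruling out girth $3$ when $k = 1$ — is the crux. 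Once this is settled we have $3 \leq 2k$ (i.e. $k \geq 2$) whenever girth$(SII(R)) = 3$, and combined with the case $n > 3$ above the bound girth$(SII(R)) \leq 2k$ holds in all cases, completing the contrapositive and hence the corollary.
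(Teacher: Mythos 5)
Your strategy is the natural one, and most of it is sound: Theorem \ref{2.11} gives $\lfloor n/2\rfloor \le k$ for $n=\mathrm{girth}(SII(R))$, which settles even $n$, and your refinement for $n>3$ is correct. Indeed, by Corollary \ref{2.10} every edge of a shortest cycle of length $n>3$ joins comparable ideals; a vertex that does not strictly contain both of its cycle-neighbors equals the intersection along one of its edges and is therefore second, and since no two ``local maxima'' can be adjacent they form an independent set in $C_n$, of size at most $\lfloor n/2\rfloor$. This yields $k\ge \lceil n/2\rceil$ and hence $n\le 2k$ for all $n\ge 4$, a count that is actually sharper than the one in the paper's proof of Theorem \ref{2.11}. (The paper itself offers no proof of Corollary \ref{2.12}; it is implicitly read off from Theorem \ref{2.11}, a derivation that commits exactly the odd-parity slip you caught.)

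However, the proposal stops at the point you yourself flag as the crux: ruling out girth $3$ when $k=1$. That gap is genuine, and it cannot be closed, because the claim is false. Let $R=K[x,y]/(x^2,y^2)$ with $K$ a field. Every nonzero ideal of $R$ contains the socle $(xy)$, and $(xy)$ is the \emph{unique} second ideal: for any ideal $I$ strictly containing $(xy)$, multiplication by $x$ or by $y$ carries $I$ onto $(xy)$, which is neither $0$ nor $I$, so $I$ is not second. Thus $k=1$. Yet $(x)$, $(y)$, $(xy)$ are pairwise adjacent in $SII(R)$, since all three pairwise intersections equal $(xy)$; this is precisely the configuration of Theorem \ref{2.9} (a non-comparable pair together with its intersection), so girth$(SII(R))=3>2=2k$. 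Note $R$ is Artinian, so no hypothesis of the type used elsewhere in the paper rescues the statement. What your argument genuinely proves is: girth$(SII(R))\le 2k+1$ in general, and girth$(SII(R))\le 2k$ whenever the girth exceeds $3$; the example shows the bound $2k+1$ is attained. So rather than trying to complete the triangle case, you should record your independent-set refinement together with this counterexample, which shows Corollary \ref{2.12} as stated needs to be weakened to girth$(SII(R))\le 2k+1$.
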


Let $G$ be a graph. A non-empty subset $D$ of the
vertex set $V(G)$ is called a \textit{dominating set} if every vertex $V (G\setminus D)$ is adjacent to at least
one vertex of $D$. The \textit{domination number} $\gamma(G)$ is the minimum cardinality among the dominating sets of $G$.
\begin{thm}\label{2.13}
Let $R$ be a commutative ring in which every ideal contains a minimal ideal and let $\mathcal{M}$ be the set of all minimal ideals of $R$. Then $\mathcal{M}$ is a minimal dominating set of $SII(R)$ and $\gamma(SII(R)) \leq |\mathcal{M}|$. Moreover, $\gamma(SII(R)) = 1$ if and only if $R$ has exactly one minimal ideal or $R$ has exactly two minimal ideals $M_1$ and $M_2$ such
that $M_1 + M_2$ is a non-trivial maximal ideal such that there is no non-second ideal
properly contained $M_1 + M_2$. Also, if $R$ has exactly two minimal ideals which does
not satisfy the above condition, then $\gamma(SII(R)) = 2$.
\end{thm}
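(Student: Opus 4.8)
The strategy is to break the statement into three parts and handle each by invoking the structural results already established, especially Theorem \ref{2.2}. First I would prove that $\mathcal{M}$ is a dominating set. Take any non-zero proper ideal $I$ of $R$. By the standing hypothesis, $I$ contains a minimal ideal $M \in \mathcal{M}$. If $I \notin \mathcal{M}$, then $M \subset I$ properly, so $I \cap M = M$ is a (minimal, hence second) ideal of $R$, whence $I$ is adjacent to $M$; thus every vertex outside $\mathcal{M}$ has a neighbour in $\mathcal{M}$, and $\mathcal{M}$ dominates $SII(R)$. The bound $\gamma(SII(R)) \leq |\mathcal{M}|$ is then immediate from the definition of the domination number. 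To see that $\mathcal{M}$ is a \emph{minimal} dominating set, I would check that no proper subset of $\mathcal{M}$ dominates: if one omits a minimal ideal $M_0$, then $M_0$ itself, as a vertex, is adjacent only to ideals $J$ with $M_0 \cap J$ second; but for a distinct minimal ideal $M' \neq M_0$ we have $M_0 \cap M' = 0$, which is not second, so $M_0$ is not adjacent to any other element of $\mathcal{M}$, and therefore $\mathcal{M} \setminus \{M_0\}$ fails to dominate $M_0$.

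Next I would establish the characterization of when $\gamma(SII(R)) = 1$. The condition $\gamma(SII(R)) = 1$ means precisely that $SII(R)$ has a universal vertex, since a single vertex dominates the whole graph if and only if it is adjacent to every other vertex. Hence this part reduces immediately to Theorem \ref{2.2}: $\gamma(SII(R)) = 1$ if and only if condition (a) or condition (b) of that theorem holds, which is exactly the stated disjunction (one minimal ideal, or two minimal ideals $M_1, M_2$ with $M_1 + M_2$ a non-trivial maximal ideal containing no non-second proper ideal). So essentially no new work is needed here beyond citing Theorem \ref{2.2}.

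Finally, for the last clause I would treat the case where $R$ has exactly two minimal ideals $M_1, M_2$ that fail the condition in (b). Since (b) does not hold and (a) cannot hold (there are two minimal ideals), Theorem \ref{2.2} tells us $SII(R)$ has no universal vertex, so $\gamma(SII(R)) \geq 2$. For the upper bound I would exhibit $\{M_1, M_2\}$ as a dominating set of size $2$: any vertex $I \notin \{M_1, M_2\}$ is not minimal (the only minimal ideals are $M_1, M_2$), so $I$ properly contains some $M_i$, giving $I \cap M_i = M_i$ second and hence $I$ adjacent to $M_i$. Thus $\{M_1, M_2\}$ dominates and $\gamma(SII(R)) \leq 2$, forcing equality.

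The main obstacle I anticipate is the minimality claim for $\mathcal{M}$, where one must be careful that removing a minimal ideal genuinely leaves a vertex undominated; this rests on the fact that two distinct minimal ideals intersect in $0$ and so are never adjacent, a point that must be stated explicitly. The $\gamma = 1$ equivalence and the final $\gamma = 2$ computation are then routine consequences of Theorem \ref{2.2} and the domination of $\{M_1, M_2\}$.
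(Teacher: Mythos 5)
Your proposal is correct and follows essentially the same route as the paper: show $\mathcal{M}$ dominates via $I \cap M = M$, observe that $\mathcal{M}\setminus\{M\}$ fails to dominate $M$ (since distinct minimal ideals meet in $0$ and hence are non-adjacent), and reduce the $\gamma = 1$ and $\gamma = 2$ claims to Theorem \ref{2.2}. The only difference is that you spell out details the paper leaves implicit, namely the non-adjacency of distinct minimal ideals and the explicit verification that $\{M_1, M_2\}$ is a dominating set in the final case.
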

\begin{proof}
Since any ideal $I$ of $R$ is contains some element $M$ of $\mathcal{M}$ and $I\cap M = M$,
which is a second ideal, $\mathcal{M}$ dominates $SII(R)$. Let $M \in \mathcal{M}$. It is to be observed that
$\mathcal{M}\setminus \{M\}$ does not dominate $M$ and so fails to dominate $SII(R)$. Thus $\mathcal{M}$ is a
minimal dominating set of $SII(R)$ and $\gamma(SII(R)) \leq |\mathcal{M}|$. The second  and third parts follow
from Theorem \ref{2.2}.
\end{proof}

\begin{ex}\label{001}
Let $R=\Bbb Z_{pqr}$, where $p$, $q$, and $r$ are primes. Then the non-zero proper ideals of $R$ are $\langle p \rangle$, $\langle q \rangle$, $\langle r \rangle$, $\langle pq  \rangle$, $\langle pr \rangle$, and $\langle qr \rangle$. In the following figures,  we can see the graphs $PIS(\Bbb Z_{pqr}))$ and $SII(\Bbb Z_{pqr})$.
\begin{figure}[H]
\centering
\begin{subfigure}[b]{0.49\textwidth}
\centering
\caption{$PIS(\Bbb Z_{pqr}))$.}
\begin{center}
\begin{tikzpicture}
[auto,node distance=2 cm,
  thick,main node/.style={circle,fill=black!10,font=\sffamily\tiny\bfseries}]
\node[main node] (1) {$\langle r \rangle$};
\node[main node] (2) [below left of=1] {$\langle qr \rangle$};
\node[main node] (3) [below right of=1] {$\langle pr \rangle$};
\node[main node] (4) [below left of=2] {$\langle q \rangle$};
\node[main node] (5) [below right of=2] {$\langle pq \rangle$};
\node[main node] (6) [below right of=3] {$\langle p \rangle$};
\path[every node/.style={font=\sffamily\small}]
    (1) edge node [left] {} (2)
    (1) edge node [left] {} (3)
    (2) edge node [left] {} (4)
    (2) edge node [left] {} (5)
    (2) edge node [left] {} (3)
              (3) edge node [left] {} (5)
              (3) edge node [left] {} (6)
              (4) edge node [left] {} (5)
              (5) edge node [left] {} (6);

\end{tikzpicture}
\end{center}
\end{subfigure}
\hfill
\begin{subfigure}[b]{0.49\textwidth}
\centering
\caption{$SII(\Bbb Z_{pqr})$.}
\begin{center}
\begin{tikzpicture}
[auto,node distance=2 cm,
  thick,main node/.style={circle,fill=black!10,font=\sffamily\tiny\bfseries}]
\node[main node] (1) {$\langle pq \rangle$};
\node[main node] (2) [below left of=1] {$\langle p \rangle$};
\node[main node] (3) [below right of=1] {$\langle q \rangle$};
\node[main node] (4) [below left of=2] {$\langle pr \rangle$};
\node[main node] (5) [below right of=2] {$\langle r \rangle$};
\node[main node] (6) [below right of=3] {$\langle qr \rangle$};
\path[every node/.style={font=\sffamily\small}]
    (1) edge node [left] {} (2)
    (1) edge node [left] {} (3)
    (2) edge node [left] {} (4)
    (2) edge node [left] {} (5)
    (2) edge node [left] {} (3)
              (3) edge node [left] {} (5)
              (3) edge node [left] {} (6)
              (4) edge node [left] {} (5)
              (5) edge node [left] {} (6);
\end{tikzpicture}
\end{center}
\end{subfigure}
\end{figure}
 \end{ex}

 \begin{rem}\label{2.14}
The inequality in the Theorem \ref{2.13} may be strict. For example, in the graph $SII(\Bbb Z_{pqr})$, we have
$\{\langle p \rangle, \langle qr \rangle\}$ or $\{\langle r \rangle, \langle pq \rangle\}$ or $\{\langle q \rangle, \langle pr \rangle\}$ forms a dominating set  of $SII(\Bbb Z_{pqr})$. But $\Bbb Z_{pqr}$ have three minimal ideals $\langle pq \rangle$, $\langle pr  \rangle$, and  $\langle qr \rangle$.
\end{rem}

\begin{prop}\label{2.15}
If $R$ and $S$ are two isomorphic commutative rings with unity, then $SII(R)$ and $SII(S)$ are isomorphic as graphs.
\end{prop}
\begin{proof}
This is straightforward.
\end{proof}

\begin{ex}\label{002}
Let $R=\Bbb Z_{12}$ and $S=\Bbb Z_{18}$. Then the non-zero proper ideals of $R$ are $\langle 2 \rangle$, $\langle 3  \rangle$, $\langle 4 \rangle$, and $\langle 6  \rangle$. Also, the non-zero proper ideals of $S$ are
$\langle 2 \rangle$, $\langle 3  \rangle$, $\langle 6 \rangle$, and $\langle 9  \rangle$.
Clearly, the rings $R$ and $S$ are not isomorphic as rings. However, both of their corresponding second ideal intersection graphs are isomorphic as we can see in the following figures.
\begin{figure}[H]
\centering
\begin{subfigure}[b]{0.49\textwidth}
\centering
\caption{$SII(\Bbb Z_{12})$.}
\begin{center}
\begin{tikzpicture}
[auto,node distance=2 cm,
  thick,main node/.style={circle,fill=black!10,font=\sffamily\tiny\bfseries}]
\node[main node] (1) {$\langle 4 \rangle$};
\node[main node] (2) [right of=1] {$\langle 2 \rangle$};
\node[main node] (3) [right of=2] {$\langle 3 \rangle$};
\node[main node] (4) [below of=2] {$\langle 6 \rangle$};
\path[every node/.style={font=\sffamily\small}]
    (1) edge node [left] {} (2)
    (2) edge node [left] {} (3)
    (2) edge node [left] {} (4)
    (3) edge node [left] {} (4);
               \end{tikzpicture}
\end{center}
\end{subfigure}
\hfill
\begin{subfigure}[b]{0.49\textwidth}
\centering
\caption{$SII(\Bbb Z_{18})$.}
\begin{center}
\begin{tikzpicture}[auto,node distance=2 cm,
  thick,main node/.style={circle,fill=black!10,font=\sffamily\tiny\bfseries}]
\node[main node] (1) {$\langle 9 \rangle$};
\node[main node] (2) [right of=1] {$\langle 3 \rangle$};
\node[main node] (3) [right of=2] {$\langle 6 \rangle$};
\node[main node] (4) [below of=2] {$\langle 2 \rangle$};
\path[every node/.style={font=\sffamily\small}]
    (1) edge node [left] {} (2)
    (2) edge node [left] {} (3)
    (2) edge node [left] {} (4)
    (3) edge node [left] {} (4);
\end{tikzpicture}
\end{center}
\end{subfigure}
\end{figure}
 \end{ex}

\begin{prop}\label{2.16}
Let $R$ be a comultiplication ring and $I$ , $J$ be ideals of $R$. Then we have the following.
\begin{itemize}
\item [(a)] If $Ann_R(I \cap J)=I+J$, then
$I$ and $J$ are adjacent in $SII(R)$ if and only if they are adjacent in $PIS(R)$.
\item [(b)] If $Ann_R(I)=J$ and  $Ann_R(J)=I$, then
$I$ and $J$ are adjacent in $SII(R)$ if and only if they are adjacent in $PIS(R)$.
\item [(c)] $I$ and $Ann_R(I)$ are adjacent in $SII(R)$ if and only if they are adjacent in $PIS(R)$.
\end{itemize}
\end{prop}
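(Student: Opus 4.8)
The plan is to reduce all three parts to one duality principle valid in any comultiplication ring, supplemented by an annihilator formula for intersections. First I would isolate the key lemma: for a comultiplication ring $R$, a non-zero ideal $K$ is a \emph{second} ideal if and only if $Ann_R(K)$ is a \emph{prime} ideal. The forward implication needs no comultiplication hypothesis. If $K$ is second and $ab\in Ann_R(K)$, then $abK=0$; since $bK=0$ or $bK=K$, in the first case $b\in Ann_R(K)$, and in the second $aK=abK=0$, so $a\in Ann_R(K)$. Moreover $Ann_R(K)\neq R$ because $1\cdot K=K\neq 0$, so $Ann_R(K)$ is a proper prime ideal. The reverse implication is where the comultiplication hypothesis is essential, and I expect it to be the main obstacle.

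For the reverse direction, suppose $P:=Ann_R(K)$ is prime. Fix $r\in R$. If $r\in P$ then $rK=0$. If $r\notin P$, I would compute $Ann_R(rK)=\{s\in R: sr\in P\}$, and observe that since $P$ is prime and $r\notin P$ this set equals $P$ itself. Applying the defining identity $L=Ann_R(Ann_R(L))$ of a comultiplication ring to $L=rK$ and to $L=K$ then gives $rK=Ann_R(Ann_R(rK))=Ann_R(P)=Ann_R(Ann_R(K))=K$. Hence for every $r\in R$ we have $rK=0$ or $rK=K$, so $K$ is second (it is non-zero because $Ann_R(K)=P\neq R$). This completes the duality lemma.

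The second ingredient is the identity $Ann_R(I\cap J)=Ann_R(I)+Ann_R(J)$ in a comultiplication ring. Here I would start from the elementary equality $Ann_R(A+B)=Ann_R(A)\cap Ann_R(B)$, apply it with $A=Ann_R(I)$ and $B=Ann_R(J)$, and use $Ann_R(Ann_R(I))=I$, $Ann_R(Ann_R(J))=J$ to get $Ann_R(Ann_R(I)+Ann_R(J))=I\cap J$. Taking annihilators once more and invoking the comultiplication identity on the ideal $Ann_R(I)+Ann_R(J)$ yields $Ann_R(I\cap J)=Ann_R(I)+Ann_R(J)$, as claimed.

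With these two tools the three parts follow quickly. For (a), the hypothesis $Ann_R(I\cap J)=I+J$ together with the duality lemma gives: $I\cap J$ is second $\iff$ $Ann_R(I\cap J)=I+J$ is prime, which is precisely the statement that $I,J$ are adjacent in $SII(R)$ if and only if they are adjacent in $PIS(R)$. For (b), the annihilator formula converts the hypotheses $Ann_R(I)=J$, $Ann_R(J)=I$ into $Ann_R(I\cap J)=J+I=I+J$, so (b) is an instance of (a). For (c), I would set $J=Ann_R(I)$; then comultiplication gives $Ann_R(J)=Ann_R(Ann_R(I))=I$, so the pair $I,\ Ann_R(I)$ satisfies the hypothesis of (b) and (c) follows. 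In each reduction I would check the degenerate case $I+J=R$: then the annihilator identities force $I\cap J=Ann_R(I+J)=Ann_R(R)=0$, so neither adjacency can hold and the equivalence is trivially preserved.
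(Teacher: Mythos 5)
Your proposal is correct and follows essentially the same route as the paper: part (a) via the duality that a non-zero ideal $K$ is second if and only if $Ann_R(K)$ is prime, part (b) reduced to (a) via the identity $Ann_R(I\cap J)=Ann_R(I)+Ann_R(J)$, and part (c) reduced to (b) via the comultiplication identity $I=Ann_R(Ann_R(I))$. The only difference is that you prove the two key ingredients from scratch, whereas the paper simply cites them from its reference on comultiplication modules; your versions of those proofs are sound.
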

\begin{proof}
(a) This follows from the fact that in the comultiplication ring $R$ an ideal $I$ is second if and only if $Ann_R(I)$ is a prime ideal of $R$ \cite[Theorem 196 (a)]{MR3934877}.

(b) We have $I+J=Ann_R(J)+Ann_R(I)=Ann_R(I \cap J)$ by \cite[Proposition 12 (b)]{MR3934877}. Now the result follows from part (a)

(c) Since $R$ is a comultiplication ring, $I=Ann_R(Ann_R(I))$. Now the result follows from part (b).
\end{proof}

\begin{thm}\label{2.17}
Let $R$ be a comultiplication ring. Then we have $PIS(R)\cong SII(R)$.
\end{thm}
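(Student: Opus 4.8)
The plan is to realize the isomorphism concretely through the annihilator operator, which the comultiplication hypothesis turns into an order-reversing involution on ideals. Both $PIS(R)$ and $SII(R)$ have the same vertex set, namely the non-zero proper ideals of $R$, so it suffices to produce a single bijection of this set that carries the edges of one graph exactly onto the edges of the other.

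First I would define $\phi(I) = Ann_R(I)$ and check that $\phi$ is a bijection of the vertex set onto itself. Since $R$ is a comultiplication ring, $Ann_R(Ann_R(I)) = I$ for every ideal $I$, so $\phi$ is its own inverse and hence a bijection on the set of all ideals of $R$; I only need to confirm that it preserves the property of being a non-zero proper ideal. If $I \neq 0$ then $1 \notin Ann_R(I)$, so $Ann_R(I) \neq R$; and if $I \neq R$, the identity $Ann_R(Ann_R(I)) = I$ forces $Ann_R(I) \neq 0$ (otherwise $I = Ann_R(0) = R$). Thus $\phi$ restricts to an involutive bijection of $V(SII(R)) = V(PIS(R))$.

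Next I would verify that $\phi$ matches edges. Let $I$ and $J$ be distinct non-zero proper ideals. By \cite[Theorem 196 (a)]{MR3934877}, $I \cap J$ is a second ideal of $R$ if and only if $Ann_R(I \cap J)$ is a prime ideal; and by \cite[Proposition 12 (b)]{MR3934877} we have $Ann_R(I \cap J) = Ann_R(I) + Ann_R(J) = \phi(I) + \phi(J)$. Chaining these equivalences, $I \cap J$ is second precisely when $\phi(I) + \phi(J)$ is prime, i.e. precisely when $\phi(I)$ and $\phi(J)$ are adjacent in $PIS(R)$ (they are distinct because $\phi$ is injective). Hence $I$ and $J$ are adjacent in $SII(R)$ if and only if $\phi(I)$ and $\phi(J)$ are adjacent in $PIS(R)$, so $\phi$ is the required graph isomorphism.

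Mathematically there is no serious obstacle here: the whole content is pre-packaged in the two cited facts (second $\leftrightarrow$ prime under $Ann_R$, and $Ann_R$ turning intersections into sums), and Proposition \ref{2.16} already records this edge-equivalence case by case. The only point demanding a little care is confirming that $\phi$ is a genuine bijection of the vertex set --- that annihilators of non-zero proper ideals are again non-zero and proper --- which the comultiplication property supplies immediately. Everything else is a formal composition of equivalences.
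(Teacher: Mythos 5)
Your proposal is correct and follows essentially the same route as the paper: the annihilator map $\phi(I)=Ann_R(I)$ as the graph isomorphism, with \cite[Theorem 196 (a)]{MR3934877} translating second ideals into prime ideals and \cite[Proposition 12]{MR3934877} turning intersections of annihilators into sums. In fact your write-up is slightly more complete than the paper's, since you explicitly verify that $\phi$ is an involutive bijection of the vertex set (annihilators of non-zero proper ideals are again non-zero and proper) and state the edge correspondence as a two-way equivalence, whereas the paper only spells out one direction.
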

\begin{proof}
Let be $\mathfrak{A}$ the set of all non-zero proper ideals of $R$. Define the map $\phi:V(PIS(R))=\mathfrak{A}\rightarrow V(SII(R))=\mathfrak{A}$ by $\phi (I)=Ann_R(I)$ for each $I \in \mathfrak{A}$. Then as $R$ is a comultiplication ring, one can see that $R$ is an isomorphism. Now let $I$ and $J$ be two non-zero proper ideal of $R$ such that
 $I$ is adjacent to $J$ in $PIS(R)$. Then $I+J$ is a prime ideal of $R$. But by \cite[Proposition 12 (a)]{MR3934877}, we have
$$
I+J=Ann_R(Ann_R(I))+Ann_R(Ann_R(J))=Ann_R(Ann_R(I)\cap Ann_R(J)).
$$
This implies that $Ann_R(I)\cap Ann_R(J)$ is a second ideal of $R$ \cite[Theorem 196 (a)]{MR3934877}. Therefore
$Ann_R(I)$ and $Ann_R(J)$ are adjacent in $SII(R)$, as needed.
\end{proof}

\begin{cor}\label{2.18}
Let $n$ be a positive integer. Then for the ring $\Bbb Z_n$ we have $PIS(\Bbb Z_n)\cong SII(\Bbb Z_n)$.
\end{cor}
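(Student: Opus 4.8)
The plan is to reduce the statement to Theorem \ref{2.17} by showing that $\Bbb Z_n$ is a comultiplication ring; once this is in hand, the isomorphism $PIS(\Bbb Z_n)\cong SII(\Bbb Z_n)$ follows immediately. So the entire task is the verification that every ideal of $\Bbb Z_n$ coincides with its double annihilator.

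First I would recall that $\Bbb Z_n$ is a principal ideal ring whose ideals are exactly $\langle d\rangle$ for the divisors $d$ of $n$. The key computation is the annihilator of such an ideal. For a divisor $d$ of $n$, an element $\bar{x}\in \Bbb Z_n$ lies in $Ann_{\Bbb Z_n}(\langle d\rangle)$ precisely when $\bar{x}\,\bar{d}=\bar{0}$, that is, when $n\mid xd$. Writing $n=d\,(n/d)$ and cancelling the factor $d$, this is equivalent to $(n/d)\mid x$. Hence
$$
Ann_{\Bbb Z_n}(\langle d\rangle)=\langle n/d\rangle .
$$
Applying this identity twice yields $Ann_{\Bbb Z_n}(Ann_{\Bbb Z_n}(\langle d\rangle))=Ann_{\Bbb Z_n}(\langle n/d\rangle)=\langle n/(n/d)\rangle=\langle d\rangle$. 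Since every ideal of $\Bbb Z_n$ has the form $\langle d\rangle$, this shows $I=Ann_{\Bbb Z_n}(Ann_{\Bbb Z_n}(I))$ for all $I$, so $\Bbb Z_n$ is a comultiplication ring. The conclusion $PIS(\Bbb Z_n)\cong SII(\Bbb Z_n)$ then follows directly from Theorem \ref{2.17}.

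The argument is entirely routine, and I do not expect any genuine obstacle. The only point requiring a little care is the equivalence $n\mid xd \iff (n/d)\mid x$, which relies on $d$ being a divisor of $n$; this is what makes the cancellation legitimate and is exactly where the structure of $\Bbb Z_n$ enters. Everything else is a short verification followed by an invocation of Theorem \ref{2.17}.
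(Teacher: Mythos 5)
Your proposal is correct and follows exactly the paper's route: establish that $\Bbb Z_n$ is a comultiplication ring and then invoke Theorem \ref{2.17}. The only difference is that the paper cites this fact from the literature (\cite[Example 11 (b)]{MR3934877}) whereas you verify it directly via the computation $Ann_{\Bbb Z_n}(\langle d\rangle)=\langle n/d\rangle$, which is a sound and self-contained substitute for the citation.
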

\begin{proof}
By \cite[Example 11 (b)]{MR3934877}, $\Bbb Z_n$ is a comultiplication ring.
Thus the result follows from Theorem \ref{2.17}
\end{proof}

The following example shows that the condition $R$ is a comultiplication ring is required in the Theorem \ref{2.17}.
\begin{ex}\label{2.19}
Consider $R=\Bbb Z$. Then $\Bbb Z$ is not a comultiplication ring, 
$E(SII(\Bbb Z))=\emptyset$, and the ideal $2\Bbb Z$ is adjacent to the ideals $2k\Bbb Z$  in $PIS(\Bbb Z)$ for each positive integer $k>1$. 
\end{ex}
\textbf{Author Contribution.}
All the authors have an equal contribution. 

\textbf{Ethics approval.}
This article does not contain any studies with human participants or animals performed by the  author.

\textbf{Conflict Of Interest.}
The author declares that there are no conflicts of interest.

\textbf{Data Availability.}
Data sharing not applicable to this article as no datasets were generated or analysed during the current study.

\textbf{Funding.}
The author has not disclosed any funding.

\bibliographystyle{amsplain}

\begin{thebibliography}{10}
\bibitem{AF11}
H.~Ansari-Toroghy and F.~Farshadifar, \emph{On the dual notion of prime submodules}, Algebra Colloq., 19(spec 1) (2012), 1109-1116.

\bibitem{MR3755273}
H. Ansari-Toroghy, F. Farshadifar, and F. Mahboobi-Abkenar. \textit{On the ideal-
based zero-divisor graphs}. Int. Electron. J. Algebra, 23:115–130, 2018.

\bibitem{MR3934877}
H. Ansari-Toroghy and F. Farshadifar. \textit{Survey on comultiplication
modules}. Surv. Math. Appl., 14:61–108, 2019.

\bibitem{BM76}
J.A Bondy and U.S.R. Murty, \emph{Graph Theory with Applications}, American Elsevier, New York, 1976.

\bibitem{CAS13}
S. \c Ceken, M. Alkan, and  F.P. Smith, \emph{The dual notion of the prime radical of a module}, J. Algebra, 392 (2013), 265-275.

\bibitem{ITS09}
I. Chakrabarty, S. Ghosh, T.K. Mukherjee and M.K. Sen, \emph{Intersection graphs of ideals of rings}, Discrete Mathematics, 309 (2009), 5381–5392.

\bibitem{saha2023prime}
M. Saha, A. Das, E.Y. Celikel, and C. Abdioglu, \emph{Prime ideal sum graph of a commutative ring}, Journal of Algebra and its Applications, 22 (06) (2023): 2350121.

\bibitem{Y01}
S.~Yassemi, \emph{The dual notion of prime submodules}, Arch. Math (Brno),
37 (2001), 273--278.
\end{thebibliography}

\end{document}